\documentclass[12pt]{article}
\usepackage{amssymb}
\usepackage{amsmath,amsthm}
\usepackage{cite}
\usepackage[dvips]{graphicx}
\usepackage{hyperref}
\usepackage{color}
\usepackage{setspace}
\usepackage{enumerate}
\usepackage{tkz-graph}
\usepackage{tikz}

\hypersetup{colorlinks=true}

\hypersetup{colorlinks=true, linkcolor=blue, citecolor=blue,urlcolor=blue}

\newtheorem{remark}{Remark}

 \newtheorem{lemma}[remark]{Lemma}
 \newtheorem{theorem}[remark]{Theorem}
 
 \newtheorem{corollary}[remark]{Corollary}

\setlength{\topmargin}{-.75 in}
\setlength{\oddsidemargin}{0.25in}
\setlength{\evensidemargin}{0.25in}
\setlength{\textwidth}{6in}
\textheight=9in

\title{On the Roman domination number of generalized Sierpi\'{n}ski graphs}

\author{F. Ramezani$^{(1)}$, E. D. Rodr\'{i}guez-Bazan$^{(2)}$, J. A. Rodr\'{\i}guez-Vel\'{a}zquez$^{(3)}$ 
\vspace{0,2cm }
\\
$^{(1)}${\small Department of Mathematics, 
Faculty of Science,}\\
{\small  Imam Khomeini International University}  {\small
P. O. Box 34194-288,
Qazvin, Iran.} \\{\small
ramezani\@@ikiu.ac.ir}  
\vspace{0,2cm }
\\
$^{(2)}${\small Department of Matemathics,}\\
{\small Central University of Las Villas,}{ \small   Carretera a Camajuan\'{i} km. $5\frac{1}{2}$. Villa Clara, Cuba.} \\{\small
erickrodriguezbazan\@@gmail.com}
\vspace{0,2cm }
\\
$^{(3)}${\small Departament d'Enginyeria Inform\`atica i Matem\`atiques,}\\
{\small Universitat Rovira i Virgili,}  {\small Av. Pa\"{\i}sos
Catalans 26, 43007 Tarragona, Spain.} \\{\small
 juanalberto.rodriguez\@@urv.cat}
}

\begin{document}
\maketitle

\begin{abstract}
 A map $f : V \rightarrow \{0, 1, 2\}$ is a Roman dominating function on a
graph $G=(V,E)$ if for every vertex $v\in V$ with $f(v) = 0$, there exists a
vertex $u$, adjacent to $v$, such that $f(u) = 2$. The weight of a Roman
dominating function is given by $f(V) =\sum_{u\in V}f(u)$. The
minimum weight of a Roman dominating function on $G$ is called the
Roman domination number of $G$. In this article we study the Roman domination number of Generalized Sierpi\'{n}ski graphs $S(G,t)$. More precisely, we obtain a general upper bound on the Roman domination number of $S(G,t)$ and we discuss the tightness of this bound. In particular, we focus  on the cases in which the base graph $G$ is a  path, a cycle,  a complete graph or a graph having exactly one universal vertex. 
\end{abstract}

{\it Keywords:} Roman domination number; Generalized Sierpi\'{n}ski graph; Sierpi\'{n}ski graph.

{\it AMS Subject Classification Numbers:}   05C69;  05C76.

\section{Introduction}
Let $G=(V,E)$ be a non-empty graph of order $n\ge 2$, and $t$ a positive integer.  We denote by $V^t $ the set of words of length $t$ on alphabet $V $. The letters of a word $u$ of length $t$ are denoted by $u_1u_2...u_t$. The concatenation of two words $u$ and $v$  is denoted by $uv$. Klav\v{z}ar and Milutinovi\'c introduced in \cite{Klavzar1997} the graph  $S(K_n, t)$, $t\ge 1$,  whose vertex set is $V^t$, where
$\{u,v\}$ is an edge if and only if there exists $i\in \{1,...,t\}$ such that:\\
$$ \mbox{ (i) }  u_j=v_j, \mbox{ if } j<i; \mbox{ (ii) } u_i\ne v_i; \mbox{ (iii) } u_j=v_i \mbox{ and } v_j=u_i  \mbox{ if } j>i.$$
As noted in \cite{Hinz2013},  in a compact form, the edge sets can be described as
$$\{\{wu_iu_j^{r-1},wu_ju_i^{r-1}\}:\,  u_i,u_j\in V, i\ne j; r\in \{1,...,t\}; w\in V^{t-r} \}.$$
The graph $S(K_3,t)$ is isomorphic to 
the graph of the  Tower of Hanoi with $t$ disks \cite{Klavzar1997}. Later, those graphs have been
called Sierpi\'{n}ski graphs in \cite{Klavzar2002} and they were studied by now from numerous points of view. 
For instance,  the authors of \cite{Gravier...Parreau} studied identifying codes, locating-dominating codes, and total-dominating codes in Sierpi\'{n}ski graphs. In \cite{Hirtz-Holz} the authors  propose an algorithm, which makes use of three automata and the fact that there are at most two internally vertex-disjoint shortest paths between any two vertices, to determine all shortest paths in Sierpi\'{n}ski graphs. The authors of \cite{Klavzar2002} proved that for any $n\ge 1$ and $t\ge 1$, the Sierpi\'{n}ski graph $S(K_n,t)$ has a unique 1-perfect code (or efficient dominating set) if $t$ is even, and $S(K_n,t)$ has exactly $n$ 1-perfect codes if $t$ is odd. 
The Hamming dimension of a graph $G$ was introduced in \cite{Klavsar-Peterin} as the largest dimension of a Hamming graph into which $G$ embeds as an irredundant induced subgraph. 
That paper gives an upper bound for the Hamming dimension of the Sierpi\'{n}ski graphs $S(K_n,t)$ for $n\ge 3$. It also showed that the Hamming dimension of $S(K_n,t)$ grows as $3^{t-3}$. 
The idea of almost-extreme vertex of $S(K_n,t)$ was  introduced in \cite{Klavsar-Zeljic} as a
vertex that is either adjacent to an extreme vertex of $S(K_n,t)$
or is incident to
an edge between two subgraphs of $S(K_n,t)$
isomorphic to $S(K_n,t-1)$.  The authors of \cite{Klavsar-Zeljic} deduced explicit formulas  for the distance in $S(K_n,t)$
between an arbitrary vertex and an almost-extreme vertex.
Also they gave  a formula of the metric dimension of a Sierpi\'{n}ski graph, which was independently obtained by Parreau in her Ph.D. thesis.  For a general background  on Sierpi\'{n}ski graph, the reader is
invited to read  the comprehensive survey \cite{Klavzar2016(survey)}  and references
therein.

This construction was generalized in \cite{GeneralizedSierpinski} for any graph $G=(V,E)$, by defining the $t$-th \emph{generalized Sierpi\'{n}ski graph} of $G$, denoted by  $S(G,t)$,  as the graph with vertex set $V^t$ and edge set defined as 
$$\{\{wu_iu_j^{r-1},wu_ju_i^{r-1}\}:\,  \{u_i,u_j\}\in E; r\in  \{1,...,t\}; w\in V^{t-r} \}.$$

\begin{figure}[ht]
\centering
\begin{tikzpicture}[transform shape, inner sep = .5mm]
\def\side{.5};
\pgfmathsetmacro\radius{\side/sqrt(3)};
\foreach \ind in {3,4,5}
{
\pgfmathparse{150-120*(\ind-3)};
\node [draw=black, shape=circle, fill=black] (\ind) at (\pgfmathresult:\radius cm) {};
}
\node [draw=black, shape=circle, fill=black] (1) at ([yshift=\side cm]3) {};
\node [draw=black, shape=circle, fill=black] (2) at ([yshift=\side cm]4) {};
\node [draw=black, shape=circle, fill=black] (6) at ([xshift=-\side cm]5) {};
\node [draw=black, shape=circle, fill=black] (7) at ([shift=({135:\side cm})]6) {};
\foreach \ind in {2,4,5}
{
\node [scale=.8] at ([xshift=.3 cm]\ind) {$\ind$};
}
\foreach \ind in {1,3,6,7}
{
\node [scale=.8] at ([xshift=-.3 cm]\ind) {$\ind$};
}
\foreach \u/\v in {1/2,1/3,2/4,3/5,4/5,5/6,6/7}
{
\draw (\u) -- (\v);
}


\def\widenodetwo{.4};
\pgfmathparse{15*\side};
\node (center) at (\pgfmathresult cm,0) {};
\pgfmathsetmacro\sideTwo{5*\side};
\pgfmathsetmacro\radiuscenter{\sideTwo/sqrt(3)};
\foreach \d in {3,4,5}
{
\pgfmathparse{150-120*(\d-3)};
\node (center\d) at ([shift=({\pgfmathresult:\radiuscenter cm})]center) {};
\foreach \ind in {3,4,5}
{
\pgfmathparse{150-120*(\ind-3)};
\node [draw=black, shape=circle, fill=black,inner sep = \widenodetwo mm] (\d\ind) at ([shift=({\pgfmathresult:\radius cm})]center\d) {};
}
\node [draw=black, shape=circle, fill=black,inner sep = \widenodetwo mm] (\d1) at ([yshift=\side cm]\d3) {};
\node [draw=black, shape=circle, fill=black,inner sep = \widenodetwo mm] (\d2) at ([yshift=\side cm]\d4) {};
\node [draw=black, shape=circle, fill=black,inner sep = \widenodetwo mm] (\d6) at ([xshift=-\side cm]\d5) {};
\node [draw=black, shape=circle, fill=black,inner sep = \widenodetwo mm] (\d7) at ([shift=({135:\side cm})]\d6) {};
\foreach \ind in {2,4,5}
{
\node [scale=.6] at ([xshift=.3 cm]\d\ind) {$\d\ind$};
}
\foreach \ind in {1,3,6,7}
{
\node [scale=.6] at ([xshift=-.3 cm]\d\ind) {$\d\ind$};
}
\foreach \u/\v in {1/2,1/3,2/4,3/5,4/5,5/6,6/7}
{
\draw (\d\u) -- (\d\v);
}
}
\foreach \d in {1,2,6,7}
{
\ifthenelse{\d=1}
{
\node (center1) at ([shift=({0,\sideTwo})]center3) {};
}
{
\ifthenelse{\d=2}
{
\node (center2) at ([shift=({0,\sideTwo})]center4) {};
}
{
\ifthenelse{\d=6}
{
\node (center6) at ([shift=({-\sideTwo,0})]center5) {};
}
{
\node (center7) at ([shift=({135:\sideTwo cm})]center6) {};
};
};
};
\foreach \ind in {3,4,5}
{
\pgfmathparse{150-120*(\ind-3)};
\node [draw=black, shape=circle, fill=black,inner sep = \widenodetwo mm] (\d\ind) at ([shift=({\pgfmathresult:\radius})]center\d) {};
}

\node [draw=black, shape=circle, fill=black,inner sep = \widenodetwo mm] (\d1) at ([yshift=\side cm]\d3) {};
\node [draw=black, shape=circle, fill=black,inner sep = \widenodetwo mm] (\d2) at ([yshift=\side cm]\d4) {};
\node [draw=black, shape=circle, fill=black,inner sep = \widenodetwo mm] (\d6) at ([xshift=-\side cm]\d5) {};
\node [draw=black, shape=circle, fill=black,inner sep = \widenodetwo mm] (\d7) at ([shift=({135:\side cm})]\d6) {};
\foreach \ind in {2,4,5}
{
\node [scale=.6] at ([xshift=.3 cm]\d\ind) {$\d\ind$};
}
\foreach \ind in {1,3,6,7}
{
\node [scale=.6] at ([xshift=-.3 cm]\d\ind) {$\d\ind$};
}
\foreach \u/\v in {1/2,1/3,2/4,3/5,4/5,5/6,6/7}
{
\draw (\d\u) -- (\d\v);
}
}
\foreach \u/\v in {1/2,1/3,2/4,3/5,4/5,5/6,6/7}
{
\ifthenelse{\u=1 \AND \v>2}
{
\draw (\u\v) -- (\v\u);
}
{
\ifthenelse{\u=3 \AND \v=5 \OR \u=6}
{
\draw (\u\v) to[bend right] (\v\u);
}
{
\draw (\u\v) to[bend left] (\v\u);
};
};
}
\end{tikzpicture}
\caption{A graph $G$ and the Sierpi\'{n}ski graph $S(G,2)$.}
\label{FigS(G,2).}
\end{figure}
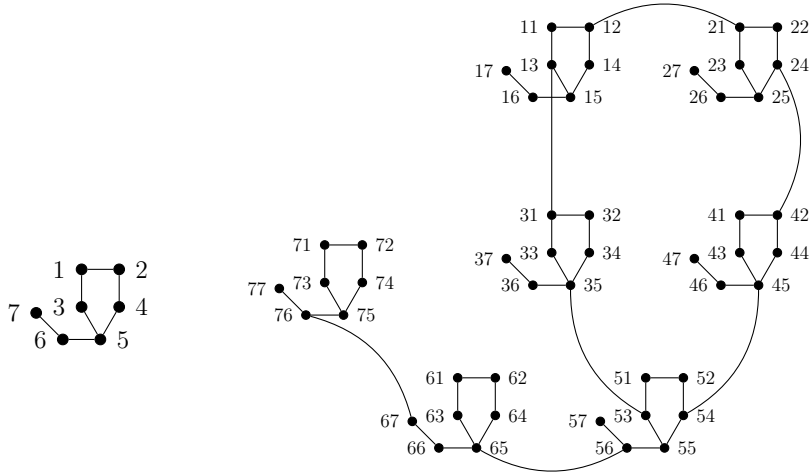

Figure \ref{FigS(G,2).} shows a graph $G$ and the generalized Sierpi\'{n}ski graph $S(G,2)$, while Figure \ref{FigS(G,3).} shows the Sierpi\'{n}ski graph $S(G,3)$.

Notice that if $\{u,v\}$ is an edge of $S(G,t)$, there is an edge $\{x,y\}$ of $G$ and a word $w$ such that $u=wxyy\dots y$ and $v=wyxx\dots x$. In general, $S(G,t)$ can be constructed recursively from $G$ with the following process: $S(G,1)=G$ and, for $t\ge 2$, we copy $n$ times $S(G, t-1)$ and add the letter $x$ at the beginning of each label of the vertices belonging to  the copy of $S(G,t-1)$ corresponding to $x$. Then for every edge $\{x,y\}$ of $G$, add an edge between vertex $xyy\dots y$ and vertex $yxx\dots x$. See, for instance, Figure \ref{FigS(G,3).}. Vertices of the form $xx\dots x$ are called \textit{extreme vertices} of $S(G,t)$. Notice that for any graph $G$ of order $n$ and any integer $t\ge 2$,  $S(G,t)$  has $n$ extreme vertices and, if $x$ has degree $d(x)$ in $G$, then the extreme vertex $xx\dots x$ of $S(G,t)$   also has degree  $d(x)$. Moreover,   the degrees of two vertices $yxx\dots x$ and  $xyy\dots y$, which connect two copies of $S(G,t-1)$, are  equal to  $d(x)+1$ and $d(y)+1$, respectively.

For any $w\in V^{t-1}$ and $t\ge 2$  the subgraph $\langle V_w \rangle$ of $S(G,t)$, induced by $V_w=\{wx:\; x\in V\}$, is isomorphic to $G$. Notice that there exists only one vertex $u\in V_w$ of the form $w'xx\ldots x$, where $w'\in V^{r}$ for some $r\le t-2$. We will say that $w'xx\ldots x$ is \textit{the extreme vertex} of $\langle V_w \rangle$, which is an extreme vertex in $S(G,t)$ whenever $r=0$. By definition of $S(G,t)$ we deduce the following remark.

\begin{remark}
Let $G=(V,E)$ be a graph, let $t\ge 2$ be an integer and $w\in V^{t-1}$.  If  $u\in V_w$ and $v\in V^t-V_w$ are adjacent in $S(G,t)$, then   either $u$ is the extreme vertex of  $\langle V_w \rangle$ or $u$ is adjacent to the extreme vertex  of  $\langle V_w \rangle$.
\end{remark}

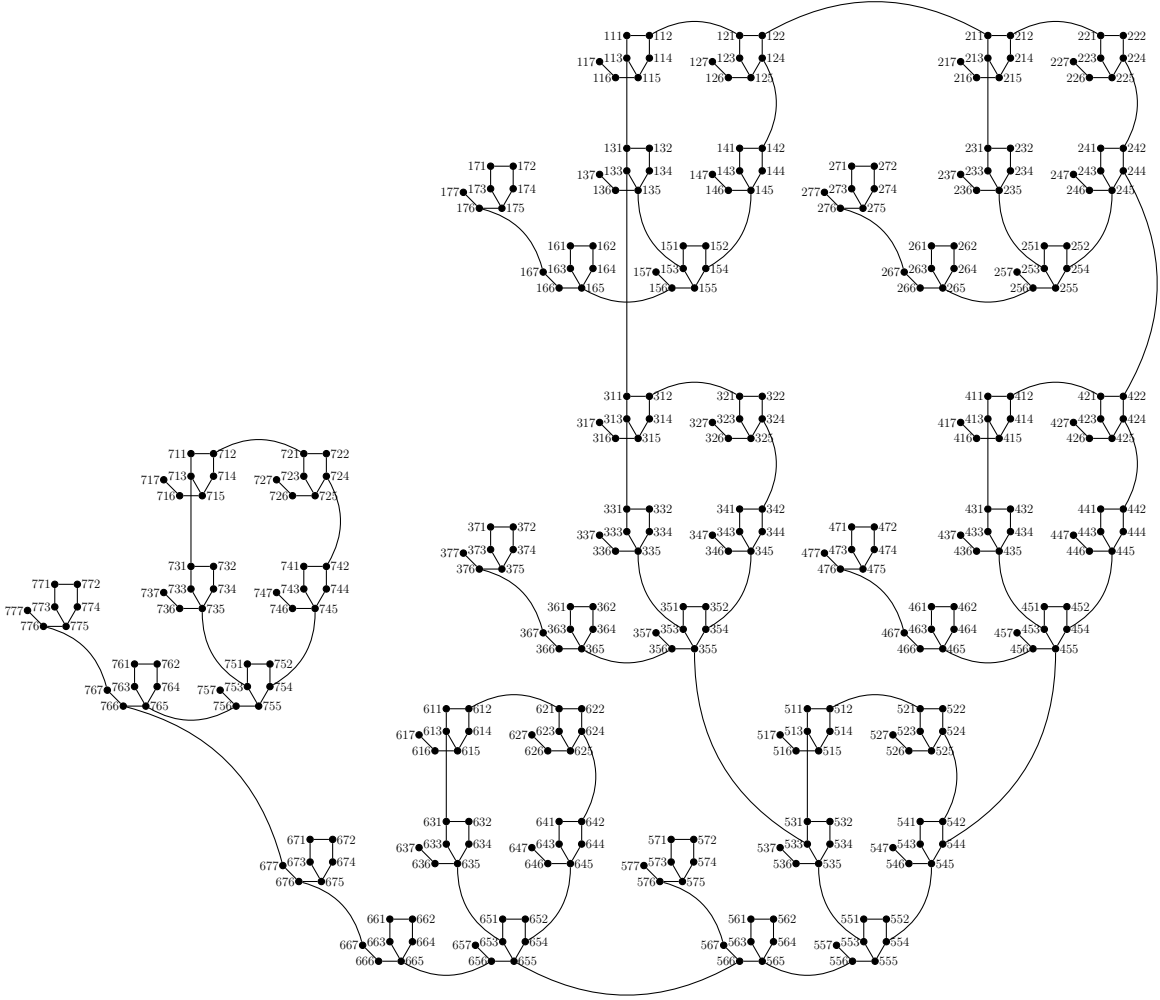
\begin{figure}[ht]
\begin{tikzpicture}[transform shape, inner sep = .3mm]


\def\side{.3};
\pgfmathsetmacro\radius{\side/sqrt(3)};
\pgfmathsetmacro\sideTwo{5*\side};
\pgfmathsetmacro\radiuscenter{\sideTwo/sqrt(3)};
\pgfmathsetmacro\sideThree{16*\side};
\pgfmathsetmacro\radiuscenterThree{\sideThree/sqrt(3)};
\node (center) at (0,0) {};
\foreach \c in {3,4,5}
{
\pgfmathparse{150-120*(\c-3)};
\node (center\c) at ([shift=({\pgfmathresult:\radiuscenterThree cm})]center) {};


\foreach \d in {3,4,5}
{
\pgfmathparse{150-120*(\d-3)};
\node (center\c\d) at ([shift=({\pgfmathresult:\radiuscenter cm})]center\c) {};
\foreach \ind in {3,4,5}
{
\pgfmathparse{150-120*(\ind-3)};
\node [draw=black, shape=circle, fill=black] (\c\d\ind) at ([shift=({\pgfmathresult:\radius cm})]center\c\d) {};
}
\node [draw=black, shape=circle, fill=black] (\c\d1) at ([yshift=\side cm]\c\d3) {};
\node [draw=black, shape=circle, fill=black] (\c\d2) at ([yshift=\side cm]\c\d4) {};
\node [draw=black, shape=circle, fill=black] (\c\d6) at ([xshift=-\side cm]\c\d5) {};
\node [draw=black, shape=circle, fill=black] (\c\d7) at ([shift=({135:\side cm})]\c\d6) {};
\foreach \ind in {2,4,5}
{
\node [scale=.4] at ([xshift=.18 cm]\c\d\ind) {$\c\d\ind$};
}
\foreach \ind in {1,3,6,7}
{
\node [scale=.4] at ([xshift=-.18 cm]\c\d\ind) {$\c\d\ind$};
}
\foreach \u/\v in {1/2,1/3,2/4,3/5,4/5,5/6,6/7}
{
\draw (\c\d\u) -- (\c\d\v);
}
}
\foreach \d in {1,2,6,7}
{
\ifthenelse{\d=1}
{
\node (center\c1) at ([shift=({0,\sideTwo})]center\c3) {};
}
{
\ifthenelse{\d=2}
{
\node (center\c2) at ([shift=({0,\sideTwo})]center\c4) {};
}
{
\ifthenelse{\d=6}
{
\node (center\c6) at ([shift=({-\sideTwo,0})]center\c5) {};
}
{
\node (center\c7) at ([shift=({135:\sideTwo cm})]center\c6) {};
};
};
};
\foreach \ind in {3,4,5}
{
\pgfmathparse{150-120*(\ind-3)};
\node [draw=black, shape=circle, fill=black] (\c\d\ind) at ([shift=({\pgfmathresult:\radius})]center\c\d) {};
}

\node [draw=black, shape=circle, fill=black] (\c\d1) at ([yshift=\side cm]\c\d3) {};
\node [draw=black, shape=circle, fill=black] (\c\d2) at ([yshift=\side cm]\c\d4) {};
\node [draw=black, shape=circle, fill=black] (\c\d6) at ([xshift=-\side cm]\c\d5) {};
\node [draw=black, shape=circle, fill=black] (\c\d7) at ([shift=({135:\side cm})]\c\d6) {};
\foreach \ind in {2,4,5}
{
\node [scale=.4] at ([xshift=.18 cm]\c\d\ind) {$\c\d\ind$};
}
\foreach \ind in {1,3,6,7}
{
\node [scale=.4] at ([xshift=-.18 cm]\c\d\ind) {$\c\d\ind$};
}
\foreach \u/\v in {1/2,1/3,2/4,3/5,4/5,5/6,6/7}
{
\draw (\c\d\u) -- (\c\d\v);
}
}
\foreach \u/\v in {1/2,1/3,2/4,3/5,4/5,5/6,6/7}
{
\ifthenelse{\u=1 \AND \v>2}
{
\draw (\c\u\v) -- (\c\v\u);
}
{
\ifthenelse{\u=3 \AND \v=5 \OR \u=6}
{
\draw (\c\u\v) to[bend right] (\c\v\u);
}
{
\draw (\c\u\v) to[bend left] (\c\v\u);
};
};
}
}
\foreach \c in {1,2,6,7}
{
\ifthenelse{\c=1}
{
\node (center1) at ([shift=({0,\sideThree})]center3) {};
}
{
\ifthenelse{\c=2}
{
\node (center2) at ([shift=({0,\sideThree})]center4) {};
}
{
\ifthenelse{\c=6}
{
\node (center6) at ([shift=({-\sideThree,0})]center5) {};
}
{
\node (center7) at ([shift=({135:\sideThree cm})]center6) {};
};
};
};


\foreach \d in {3,4,5}
{
\pgfmathparse{150-120*(\d-3)};
\node (center\c\d) at ([shift=({\pgfmathresult:\radiuscenter cm})]center\c) {};
\foreach \ind in {3,4,5}
{
\pgfmathparse{150-120*(\ind-3)};
\node [draw=black, shape=circle, fill=black] (\c\d\ind) at ([shift=({\pgfmathresult:\radius cm})]center\c\d) {};
}
\node [draw=black, shape=circle, fill=black] (\c\d1) at ([yshift=\side cm]\c\d3) {};
\node [draw=black, shape=circle, fill=black] (\c\d2) at ([yshift=\side cm]\c\d4) {};
\node [draw=black, shape=circle, fill=black] (\c\d6) at ([xshift=-\side cm]\c\d5) {};
\node [draw=black, shape=circle, fill=black] (\c\d7) at ([shift=({135:\side cm})]\c\d6) {};
\foreach \ind in {2,4,5}
{
\node [scale=.4] at ([xshift=.18 cm]\c\d\ind) {$\c\d\ind$};
}
\foreach \ind in {1,3,6,7}
{
\node [scale=.4] at ([xshift=-.18 cm]\c\d\ind) {$\c\d\ind$};
}
\foreach \u/\v in {1/2,1/3,2/4,3/5,4/5,5/6,6/7}
{
\draw (\c\d\u) -- (\c\d\v);
}
}
\foreach \d in {1,2,6,7}
{
\ifthenelse{\d=1}
{
\node (center\c1) at ([shift=({0,\sideTwo})]center\c3) {};
}
{
\ifthenelse{\d=2}
{
\node (center\c2) at ([shift=({0,\sideTwo})]center\c4) {};
}
{
\ifthenelse{\d=6}
{
\node (center\c6) at ([shift=({-\sideTwo,0})]center\c5) {};
}
{
\node (center\c7) at ([shift=({135:\sideTwo cm})]center\c6) {};
};
};
};
\foreach \ind in {3,4,5}
{
\pgfmathparse{150-120*(\ind-3)};
\node [draw=black, shape=circle, fill=black] (\c\d\ind) at ([shift=({\pgfmathresult:\radius})]center\c\d) {};
}

\node [draw=black, shape=circle, fill=black] (\c\d1) at ([yshift=\side cm]\c\d3) {};
\node [draw=black, shape=circle, fill=black] (\c\d2) at ([yshift=\side cm]\c\d4) {};
\node [draw=black, shape=circle, fill=black] (\c\d6) at ([xshift=-\side cm]\c\d5) {};
\node [draw=black, shape=circle, fill=black] (\c\d7) at ([shift=({135:\side cm})]\c\d6) {};
\foreach \ind in {2,4,5}
{
\node [scale=.4] at ([xshift=.18 cm]\c\d\ind) {$\c\d\ind$};
}
\foreach \ind in {1,3,6,7}
{
\node [scale=.4] at ([xshift=-.18 cm]\c\d\ind) {$\c\d\ind$};
}
\foreach \u/\v in {1/2,1/3,2/4,3/5,4/5,5/6,6/7}
{
\draw (\c\d\u) -- (\c\d\v);
}
}
\foreach \u/\v in {1/2,1/3,2/4,3/5,4/5,5/6,6/7}
{
\ifthenelse{\u=1 \AND \v>2}
{
\draw (\c\u\v) -- (\c\v\u);
}
{
\ifthenelse{\u=3 \AND \v=5 \OR \u=6}
{
\draw (\c\u\v) to[bend right] (\c\v\u);
}
{
\draw (\c\u\v) to[bend left] (\c\v\u);
};
};
}
}
\foreach \u/\v in {1/2,1/3,2/4,3/5,4/5,5/6,6/7}
{
\ifthenelse{\u=1 \AND \v>2}
{
\draw (\u\v\v) -- (\v\u\u);
}
{
\ifthenelse{\u=3 \AND \v=5 \OR \u=6}
{
\draw (\u\v\v) to[bend right] (\v\u\u);
}
{
\draw (\u\v\v) to[bend left] (\v\u\u);
};
};
}
\end{tikzpicture}
\caption{The Sierpi\'{n}ski graph $S(G,3)$ for the graph $G$ of Figure \ref{FigS(G,2).}. }
\label{FigS(G,3).}
\end{figure}

The authors of  \cite{GeneralizedSierpinski} announced some results about generalized Sierpi\'{n}ski graphs concerning their automorphism groups  and perfect codes. These results definitely deserve
to be published. Since then some papers have been published on various aspects of 
generalized Sierpi\'{n}ski graphs. For instance, in \cite{Rodriguez-Velazquez2015a} their chromatic number,
vertex cover number, clique number, and domination number, are investigated.
The authors of \cite{Rodriguez-Velazquez2015} obtained closed formulae for the Randi\'{c} index of polymeric networks modelled by  generalized Sierpi\'{n}ski graphs, while in \cite{2015arXiv151007982E}  this work was extended to the so-called generalized Randi\'{c} index. Also, the total chromatic number of generalized Sierpi\'{n}ski graphs was  studied  in \cite{Geetha2015} and the strong metric dimension has recently been  studied in \cite{Rodriguez-Velazquez2016}.
In this paper we obtain closed formulae or bounds on the Roman domination number of generalized Sierpi\'{n}ski graphs $S(G,t)$ in terms of parameters of the base graph $G$. 

We begin by establishing the principal terminology and notation
which we will use throughout the article. Hereafter $G=(V,E)$
denotes a finite simple graph of order $n\ge 2$. The distance between two vertices $x,y\in V$ will be denoted by $d_G(x,y)$. For two adjacent vertices $u$ and $v$ of $G$ we use the notation  $u\sim v$. For a  vertex
$v$ of $G$, $N(v)=\{u\in V:\; u\sim v\}$ denotes the set of neighbors that $v$ has in $G$. $N(v)$ is called the \emph{open neighborhood of} $v$
 and the \emph{close neighborhood of} $v$ is defined as $N[v]=N(v)\cup \{v\}$.
For a set $D\subseteq V$, the \emph{open neighborhood }is $N(D)=\cup_{v\in D}N(v)$ and the \emph{closed neighborhood} is $N[D]=N(D)\cup D$. A set  $D$ is a {\em dominating set} if $N[D]=V$.
 The \emph{domination number} $\gamma(G)$  is the minimum cardinality of a dominating set in $G$.  We say that a set $S$ is a $\gamma(G)$-set if it is a dominating set and $|S|=\gamma(G)$.
The subgraph induced a subset $S$ of vertices will be denoted by $\langle S\rangle$.

A map $f : V \rightarrow \{0, 1, 2\}$ is a \emph{Roman dominating function} on a
graph $G$ if for every vertex $v$ with $f(v) = 0$, there exists a
vertex $u\in N(v)$ such that $f(u) = 2$. The {\em weight} of a Roman
dominating function is given by $f(V) =\sum_{u\in V}f(u)$. The
minimum weight of a Roman dominating function on $G$ is called the
{\em Roman domination number} of $G$ and it is denoted by
$\gamma_{_R}(G)$.

Any Roman dominating function $f$ on a graph $G$ induces three sets $B_0,
B_1, B_2$, where $B_i = \{v\in V\;:\; f(v) = i\}$. Thus, we will write $f=(B_0,B_1,B_2)$.  It is clear that for any
Roman dominating function $f=(B_0,B_1,B_2)$ on a graph $G=(V,E)$ of order $n$ we
have that $f(V)=\sum_{u\in V}f(u)=2|B_2|+|B_1|$ and
$|B_0|+|B_1|+|B_2|=n$.  We say that a function $f=(B_0,B_1,B_2)$ is a $\gamma_{R}(G)$-function if it is a Roman dominating function and $f(V)=\gamma_{_R}(G)$.

The Roman domination number was introduced by Cockayne et al. \cite{Cockayne2004} in 2004 and  since then about 100 papers have been published on various aspects of Roman domination in graphs (for examples, see \cite{Bermudo-Roman,Chellali-Roman2016}). For instance, in \cite{Cockayne2004,Henning2003} was obtained the following result, which shows the relationship between the domination number and the Roman domination number of a graph. 

\begin{lemma}{\em \cite{Cockayne2004,Henning2003}}\label{lema-roman-dom}
For any graph $G$, $\gamma(G)\le \gamma_{_R}(G)\le 2\gamma(G)$.
\end{lemma}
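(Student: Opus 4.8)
The plan is to establish the two inequalities separately, exploiting the natural correspondence between dominating sets and Roman dominating functions that assign only the values $0$ and $2$.

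For the upper bound $\gamma_{_R}(G)\le 2\gamma(G)$, I would start from a $\gamma(G)$-set $S$, so that $|S|=\gamma(G)$ and $N[S]=V$, and build a Roman dominating function directly from it. Define $f=(B_0,B_1,B_2)$ with $B_2=S$, $B_1=\emptyset$, and $B_0=V\setminus S$. To verify that $f$ is a Roman dominating function, take any $v$ with $f(v)=0$; then $v\notin S$, and since $S$ dominates $G$ there is a neighbor $u\in S$ of $v$, whence $f(u)=2$. Thus $f$ is a Roman dominating function of weight $f(V)=2|B_2|=2|S|=2\gamma(G)$, which gives $\gamma_{_R}(G)\le 2\gamma(G)$.

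For the lower bound $\gamma(G)\le \gamma_{_R}(G)$, I would proceed in the opposite direction: fix a $\gamma_{_R}(G)$-function $f=(B_0,B_1,B_2)$ and consider the set $D=B_1\cup B_2$. By the definition of Roman domination, every vertex of $B_0$ has a neighbor in $B_2\subseteq D$, while every vertex of $B_1\cup B_2$ already lies in $D$; hence $N[D]=V$ and $D$ is a dominating set. Consequently $\gamma(G)\le |D|=|B_1|+|B_2|\le |B_1|+2|B_2|=f(V)=\gamma_{_R}(G)$.

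The argument is entirely routine, and I do not expect a genuine obstacle: the only points needing care are the correct verification of the Roman domination condition in the first construction, and the final chain of inequalities in the second, where one uses that dropping the factor $2$ on $|B_2|$ can only decrease the cardinality, so that $|B_1|+|B_2|\le 2|B_2|+|B_1|=f(V)$.
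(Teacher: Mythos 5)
Your proof is correct, and it is exactly the standard argument from the cited sources \cite{Cockayne2004,Henning2003} (the paper itself states this lemma without proof, only with the citation): the upper bound via the Roman dominating function $(V\setminus S,\emptyset,S)$ built from a $\gamma(G)$-set $S$, and the lower bound via observing that $B_1\cup B_2$ is a dominating set of cardinality at most $|B_1|+2|B_2|=\gamma_{_R}(G)$. Both verifications are carried out properly, so there is nothing to add.
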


As shown in \cite{Cockayne2004}, $\gamma(G)= \gamma_{_R}(G)$ if and only if $G$ is an empty graph.  A graph $G$ is said to be a \emph{Roman graph} if $\gamma_{_R}(G)=2\gamma(G)$. Several examples of Roman graphs are given in 
\cite{Cockayne2004,Xueliang2009,YeroJA2013}.
\begin{theorem}{\em \cite{Cockayne2004}}\label{lema-RomanGraph}
A graph $G$ is Roman if and only if it has a $\gamma_{_R}$-function $f=(B_0,\emptyset,B_2)$.
\end{theorem}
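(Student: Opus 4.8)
The plan is to prove the two implications separately, using Lemma~\ref{lema-roman-dom} (that is, $\gamma(G)\le\gamma_{_R}(G)\le 2\gamma(G)$) as the only external ingredient. The guiding idea throughout is that a Roman dominating function whose image avoids the value $1$ is essentially the same object as a dominating set, with each chosen vertex contributing weight $2$.

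For the forward direction, I would assume $G$ is Roman, so that $\gamma_{_R}(G)=2\gamma(G)$, and then manufacture a $\gamma_{_R}$-function of the desired shape directly from a minimum dominating set. Concretely, take a $\gamma(G)$-set $S$ and define $f$ by $f(v)=2$ for $v\in S$ and $f(v)=0$ otherwise. Since $S$ dominates $G$, every vertex labelled $0$ has a neighbour in $S$ carrying label $2$, so $f=(V\setminus S,\emptyset,S)$ is a Roman dominating function of weight $2|S|=2\gamma(G)=\gamma_{_R}(G)$. Hence $f$ is a $\gamma_{_R}$-function with $B_1=\emptyset$, as required.

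For the converse, I would start from a $\gamma_{_R}$-function $f=(B_0,\emptyset,B_2)$ and show $\gamma_{_R}(G)=2\gamma(G)$. The key step is to observe that $B_2$ is a dominating set: because $B_1=\emptyset$, every vertex lies in $B_0\cup B_2$; the Roman condition forces each vertex of $B_0$ to have a neighbour in $B_2$, and the vertices of $B_2$ dominate themselves. Thus $\gamma(G)\le|B_2|$. Since $B_1=\emptyset$, the weight is $\gamma_{_R}(G)=f(V)=2|B_2|\ge 2\gamma(G)$, and combining this with the upper bound $\gamma_{_R}(G)\le 2\gamma(G)$ from Lemma~\ref{lema-roman-dom} gives $\gamma_{_R}(G)=2\gamma(G)$, so $G$ is Roman.

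I expect no genuine obstacle here, as the argument is quite short. The one point requiring care is the identification in the converse that $B_2$ forms a dominating set of size $|B_2|=\tfrac12\gamma_{_R}(G)$; this is exactly what links the weight of the function to the domination number and lets the squeeze between the two bounds close. Everything else is bookkeeping on the partition $(B_0,B_1,B_2)$ together with the relation $f(V)=2|B_2|+|B_1|$.
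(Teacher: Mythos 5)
Your proof is correct and is essentially the standard argument from Cockayne et al., which the paper simply cites without reproducing: the forward direction builds a weight-$2\gamma(G)$ Roman dominating function from a $\gamma(G)$-set, and the converse squeezes $2\gamma(G)\le 2|B_2|=\gamma_{_R}(G)\le 2\gamma(G)$ using that $B_2$ dominates when $B_1=\emptyset$. No gaps; nothing further is needed.
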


The following result, stated in \cite{Cockayne2004}, will be used as a tool to study the Roman domination number of $S(G,t)$ for the cases in which the base graph is a path or a cycle.

\begin{theorem} {\rm \cite{Cockayne2004}} \label{RomanDominationPn} For the classes of paths $P_n$ and cycles $C_n$, $\gamma_{_R}(P_n)=\gamma_{_R}(C_n)=\lceil \frac{2n}{3} \rceil$.
\end{theorem}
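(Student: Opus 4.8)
The plan is to establish the matching upper and lower bounds $\gamma_{_R}(P_n),\,\gamma_{_R}(C_n)\le \lceil 2n/3\rceil$ and $\gamma_{_R}(P_n),\,\gamma_{_R}(C_n)\ge \lceil 2n/3\rceil$ separately, handling the two families in parallel wherever possible. The upper bound is obtained from an explicit construction, while the lower bound comes from a short counting argument valid for any graph of maximum degree at most $2$.

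For the upper bound I would exhibit an explicit Roman dominating function. Label the vertices consecutively $v_1,\dots,v_n$ (cyclically in the case of $C_n$) and split them into $\lfloor n/3\rfloor$ consecutive triples together with a remainder of $r=n-3\lfloor n/3\rfloor\in\{0,1,2\}$ vertices. Assigning the value $2$ to the middle vertex of each triple dominates all three vertices of that triple, since the middle vertex is adjacent to the other two; on the (at most two) leftover vertices I assign the value $1$. One checks directly that this gives a Roman dominating function of weight $2\lfloor n/3\rfloor + r$, and a short case analysis on $n \bmod 3$ shows this equals $\lceil 2n/3\rceil$ in each of the three cases. Since $P_n$ is a spanning subgraph of $C_n$, the same assignment is also a Roman dominating function on $C_n$, so the cycle upper bound follows at once; alternatively the triple construction applies verbatim on the cycle.

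The heart of the argument is the lower bound. Let $f=(B_0,B_1,B_2)$ be an arbitrary Roman dominating function on $G\in\{P_n,C_n\}$. Every vertex of $B_0$ has a neighbour in $B_2$, while each vertex of $B_2$ has at most two neighbours; counting the edges between $B_0$ and $B_2$ therefore gives $|B_0|\le 2|B_2|$. Combining this with $n=|B_0|+|B_1|+|B_2|$ yields $n\le 3|B_2|+|B_1|$. Writing $f(V)=2|B_2|+|B_1|$, we obtain $3f(V)=6|B_2|+3|B_1|\ge 2(3|B_2|+|B_1|)\ge 2n$, hence $f(V)\ge 2n/3$, and since $f(V)$ is an integer, $f(V)\ge \lceil 2n/3\rceil$. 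Taking the minimum over all such $f$ gives $\gamma_{_R}(G)\ge \lceil 2n/3\rceil$, matching the upper bound.

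I expect the only delicate points to be bookkeeping rather than genuine obstacles: verifying that the triple-plus-remainder construction attains exactly $\lceil 2n/3\rceil$ for all three residues of $n$ modulo $3$ (and that the path endpoints, respectively the cyclic wraparound, cause no trouble with the remaining vertices), and the integrality step that upgrades $f(V)\ge 2n/3$ to the ceiling. The degree bound $|B_0|\le 2|B_2|$ is exactly where the hypothesis that $G$ is a path or cycle enters, and it is what makes the counting estimate tight.
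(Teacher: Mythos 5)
Your proof is correct. Note, however, that the paper does not prove this statement at all: Theorem \ref{RomanDominationPn} is imported verbatim from Cockayne et al.\ \cite{Cockayne2004}, so there is no in-paper argument to compare against. Your two halves are exactly the standard ones from that source: the upper bound is the periodic pattern $2,0,0,2,0,0,\dots$ with weight-$1$ leftovers (and your observation that a Roman dominating function on the spanning subgraph $P_n$ works on $C_n$ is precisely the paper's Remark \ref{RomanSubgraph}), while your lower bound is the $\Delta\le 2$ case of the general estimate $\gamma_{_R}(G)\ge \frac{2n}{\Delta+1}$, obtained by double-counting the $B_0$--$B_2$ edges to get $|B_0|\le 2|B_2|$, hence $n\le 3|B_2|+|B_1|\le \frac{3}{2}\left(2|B_2|+|B_1|\right)=\frac{3}{2}f(V)$, and then rounding up by integrality. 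All three residues of $n$ modulo $3$ check out in the construction, so the argument is complete and self-contained.
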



Let $G=(V,E)$ be a graph, and $H=(V,E')$ a subgraph of $G$. Since any $\gamma_{_R}(H)$-function is a Roman dominating function of $G$, we can state the following remark.

\begin{remark} \label{RomanSubgraph}
Let $G=(V,E)$ be a graph, and $H=(V,E')$ a subgraph of $G$.
 Then $\gamma_{_R}(G)\le \gamma_{_R}(H)$. 
\end{remark}

\section{An upper bound on the Roman domination number}

%
%

Let $f=(B_0,B_1,B_2) $  be a $\gamma_{_R}$-function on $G$   and let $D_i$ be the set of non-isolated vertices of $\langle B_i \rangle$ for $i \in \lbrace 0,1,2 \rbrace$. Also, let $D_{12}$   be the set of non-isolated vertices of $\langle B_1 \cup B_2\rangle $. Notice that, if we take $f$ such that $|B_1|$ is minimum, then $B_1$ is an independent set, which implies that $D_1=\emptyset$ and $D_{1,2}=D_2$. 
With these notations in mind  we state the following result.

\begin{theorem}\label{UpBoundRomanDomSierpinski} Let $G$ be a graph of order $n$. For any $\gamma_{_R}$-function $f=(B_0,B_1,B_2) $   on $G$, and any integer $t \geq 2$,
\begin{center}
$\gamma_{_R}(S(G,t)) \leq n^{t-2}( n \gamma_{_R}(G)-|B_2|-|D_{12}|-\theta +\frac{1}{2} |D_1|),$
\end{center}
where $\theta=|\lbrace u\in B_1\setminus D_1: \, d_G(u,v)=2 \text{ for some } v \in B_2 
\text{ such that }
 |N_G(v) \cap B_0|=2   \rbrace   |$.
\end{theorem}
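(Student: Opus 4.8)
The plan is to reduce the statement to the case $t=2$ and then build, from $f$, an explicit Roman dominating function on $S(G,2)$ of the required weight. For the reduction, partition $V^t$ according to the first $t-2$ letters, obtaining $n^{t-2}$ classes $V_w=\{wx_1x_2:\, x_1,x_2\in V\}$, $w\in V^{t-2}$, each of size $n^2$. Inspecting the edge description of $S(G,t)$, an edge $\{wu_iu_j^{r-1},wu_ju_i^{r-1}\}$ has both endpoints in the same class exactly when $r\le 2$, and the subgraph induced by $V_w$ is isomorphic to $S(G,2)$; the edges with $r\ge 3$ run between different classes. Hence the disjoint union of these $n^{t-2}$ copies of $S(G,2)$ is a spanning subgraph of $S(G,t)$, so by Remark~\ref{RomanSubgraph} together with additivity of $\gamma_{_R}$ over components, $\gamma_{_R}(S(G,t))\le n^{t-2}\gamma_{_R}(S(G,2))$. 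Thus it suffices to exhibit a Roman dominating function on $S(G,2)$ of weight at most $n\gamma_{_R}(G)-|B_2|-|D_{12}|-\theta+\tfrac12|D_1|$.

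For $t=2$ I would start from the \emph{naive} function $g_0(xy)=f(y)$, which assigns $f$ to each copy $\langle V_x\rangle\cong G$ and has weight $n\gamma_{_R}(G)$; all savings come from the inter-copy edges. The key observation is that the cross edge associated with $\{x,y\}\in E$ joins $xy$ (of value $f(y)$) to $yx$ (of value $f(x)$); consequently, when $x\in B_2$ every boundary vertex $xy$ with $y\in N_G(x)$ is dominated from outside its copy by $yx$, whose value is $f(x)=2$. Exploiting this, in each copy indexed by $x\in B_2$ I lower the extreme vertex $xx$: to $1$ when $x$ is isolated in $\langle B_2\rangle$ (it then only needs to dominate itself, its $B_0$-neighbours being covered externally), and to $0$ when $x$ has a neighbour $w\in B_2$, since then $xw$ has value $2$ and dominates $xx$. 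These reductions are independent across copies and realize the $|B_2|$ contribution together with the part of $|D_{12}|$ carried by the edges of $\langle B_2\rangle$.

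The remaining terms refine this along $B_1$. If $u\in B_1$ has a neighbour in $B_2$, the extreme vertex $uu$ (of value $1$) has a value-$2$ neighbour inside its copy and may be set to $0$, contributing the part of $|D_{12}|$ coming from edges meeting $B_1$. The quantity $\theta$ accounts for a subtler, non-local saving: for $u\in B_1\setminus D_1$ with a vertex $v\in B_2$ at distance $2$ through a common neighbour $m$ and $|N_G(v)\cap B_0|=2$, one can re-choose the Roman dominating function on the copy indexed by $u$ so that it still has weight $\gamma_{_R}(G)$ but now carries value $2$ on the coordinate $m$; this makes the vertex $mu$ in the copy indexed by $m$ externally dominated, so its value $f(u)=1$ may be dropped to $0$, saving one further unit without increasing the weight of copy $u$. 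Finally, along an edge of $\langle B_1\rangle$ the $D_{12}$-type saving cannot be fully realized, since two adjacent value-$1$ vertices provide no value-$2$ dominator; this is corrected by $-\tfrac12|D_1|$, the factor $\tfrac12$ reflecting that each such edge is counted at both endpoints.

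The main obstacle is not any single reduction but showing that all of them can be performed \emph{simultaneously} while the result remains a Roman dominating function and the savings add up exactly as claimed. The delicate points I expect are: verifying that after lowering the extreme vertices and executing the $\theta$-rearrangements every vertex of $S(G,2)$ still has value $\ge 1$ or a value-$2$ neighbour; checking that the modifications are pairwise disjoint so that no saving is counted twice (this is precisely where the hypotheses $u\notin D_1$, $d_G(u,v)=2$ and $|N_G(v)\cap B_0|=2$ in the definition of $\theta$ are used); and justifying the exact coefficient $\tfrac12$ in the $D_1$-correction by a careful edge count in $\langle B_1\rangle$. Once this bookkeeping is settled, summing the contributions gives a Roman dominating function on $S(G,2)$ of weight $n\gamma_{_R}(G)-|B_2|-|D_{12}|-\theta+\tfrac12|D_1|$, and the reduction of the first paragraph upgrades it to the stated bound for every $t\ge 2$.
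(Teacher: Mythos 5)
Your reduction to $t=2$ is sound: for $r\ge 3$ the two endpoints of an edge $\{wu_iu_j^{r-1},wu_ju_i^{r-1}\}$ differ in one of the first $t-2$ letters, each class $V_w=\{wx_1x_2\}$ induces a copy of $S(G,2)$, and Remark~\ref{RomanSubgraph} together with additivity of $\gamma_{_R}$ over components gives $\gamma_{_R}(S(G,t))\le n^{t-2}\gamma_{_R}(S(G,2))$. This cleanly packages what the paper does implicitly, since all four of its modification steps take place inside such classes. Your treatment of the terms $|B_2|$ and $|D_2|$ coincides with the paper's Steps 1--2 (lower $xx$ to $1$ for $x\in B_2$, using the cross edge $xy\sim yx$ with $f(x)=2$ for $y\in N_G(x)\cap B_0$; lower to $0$ for $x\in D_2$ via $xw$, $w\in B_2\cap N_G(x)$), and your $\theta$-move is exactly the paper's Step 4: inside the copy indexed by $u$ one relabels $v\mapsto 0$, $m\mapsto 2$, $v_0\mapsto 1$, $uu\mapsto 0$ (net change $0$, legitimate because $|N_G(v)\cap B_0|=2$), and then drops $mu$ from $1$ to $0$ via the cross edge $um\sim mu$.

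The genuine gap is in your handling of $D_1$, in two respects. First, in any $\gamma_{_R}$-function there are \emph{no} edges between $B_1$ and $B_2$ (a vertex labelled $1$ adjacent to a $2$ could be relabelled $0$, lowering the weight), so your step lowering $uu$ for ``$u\in B_1$ with a neighbour in $B_2$'' is vacuous, and $|D_{12}|=|D_1|+|D_2|$: the entire $D_1$ portion of $|D_{12}|$ still has to be earned by an actual construction. Second, your closing claim that the edges of $\langle B_1\rangle$ merely force a bookkeeping ``correction $-\tfrac12|D_1|$'' is not a proof device: simply declining the saving at $D_1$ vertices yields only $n\gamma_{_R}(G)-|B_2|-|D_2|-\theta=n\gamma_{_R}(G)-|B_2|-|D_{12}|-\theta+|D_1|$, which is strictly weaker than the stated bound whenever $D_1\neq\emptyset$. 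What is needed is the paper's Step 3: since $\langle B_1\rangle$ has maximum degree one (again by minimality of $f$), $\langle D_1\rangle$ is a perfect matching with edges $\{v_i,u_i\}$, and for each such edge and each prefix $w$ one sets $g(wv_iv_i)=g(wu_iv_i)=0$ while \emph{raising} $g(wv_iu_i)$ from $1$ to $2$; the raised vertex dominates $wv_iv_i$ inside its copy and $wu_iv_i$ across the cross edge $wv_iu_i\sim wu_iv_i$, for a net $-1$ per matched edge, i.e.\ exactly $-\tfrac12|D_1|$ per class. This asymmetric relabelling is a missing idea in your plan, not deferred bookkeeping. By contrast, the consistency issues you flagged for the $\theta$-step are real and are where the paper invests its minimality arguments (uniqueness of the partner $w_1$ for each $w_2\in B_2'$, and the $5$-cycle relabelling showing the path through a given pair $(v_0,w_1)$ is unique, which underlie the counts $|A_1|=|A_3|=|A_4|=|A_5|=|B_2'|$ and $|A_2|=\theta$); so identifying those as the delicate points was correct, but they too remain unproved in your proposal.
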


\begin{proof}
 Let $f=(B_0,B_1,B_2)$ be a $\gamma_{_R}(G)$-function. For a given integer $t \geq 2 $ we define $S_i=\lbrace wx ; \quad w \in V^{t-1} \: , \: x \in B_i \rbrace$, for $i \in \lbrace 0,1,2 \rbrace$. Let $g:V^t \rightarrow \lbrace0,1,2\rbrace$ such that $g=(S_0,S_1,S_2)$. If $v \in V^t$ and $g(v)=0$, then $v=wy$ where $w$ is a word in $V^{t-1}$ and $y \in B_0$. Since $f$ is a $\gamma_{_R}$-function on $G$, there is $z \in B_2 \cap N_G(y)$. Hence $wz \in S_2 \cap N_{S(G,t)} (wy)$. So $g$ is a Roman dominating function on $S(G,t)$ and $\gamma_{_R}(S(G,t)) \leq \omega(g)=n^{t-1}(|B_1|+2|B_2|)=n^{t-1} \gamma_{_R}(G)$. Now we have four steps for reach the result.\\
 \\
{ Step 1}: Set $S'_2 = \lbrace wuu: \, w \in V^{t-2},  \; u \in B_2\rbrace$. We define $g_1: V^t \rightarrow \lbrace0,1,2\rbrace$ such that $g_1=(S_0,S_1 \cup S'_2,S_2 \setminus S'_2)$. Let $y \in S_0$. Then $y$ has the form $wuv_0$ where $w \in V^{t-2}$, $v_0 \in B_0$ and $u\in V$. Since $f$ is a $\gamma_{_R}(G)$-function, there is $v_2 \in B_2$ such that $v_0 $ is adjacent to $v_2$ in graph $ G$. So $wuv_0$ is adjacent to $wuv_2$.  If $wuv_2 \in S_2\setminus S'_2$, then we are done. Now, if $wuv_2\in S'_2$, then 
 $v_2=u$ and, since $v_0$ is adjacent to $v_2$,  we can conclude that  
$y=wv_2v_0$ is adjacent to $wv_0v_2$. Hence $g_1$ is a Roman dominating function on $S(G,t)$. Therefore $\gamma_{_R}(S(G,t)) \leq \omega(g_1)=n^{t-2}(n\gamma_{_R}(G)-|B_2|)$.\\
\\
{Step 2}: Set $S''_2=\lbrace wvv: \,  w \in V^{t-2} ,\,  v \in D_2 \rbrace$. We define $g_2:V^t \rightarrow \lbrace0,1,2\rbrace$ where
\begin{eqnarray*}g_2(x )= \left \{ \begin{array}{ll} 

0, & x \in S''_2;
\\
g_1(x) , & \, \text{otherwise}.
\end{array} \right
.\end{eqnarray*}
Let  $x \in V^t$ such that $g_2(x)=0 $. In this case, $g_1(x)=0$ or  $x\in S''_2$.

Suppose that  $g_1(x)=0$. Since $x$ must belong to $S_0$,  it is of the form $x=wuv_0$, where $w \in V^{t-2}           
$, $u\in V$ and $v_0\in B_0$.  If $N_{S(G,t)}(x)\cap S_2''=\emptyset$, then there exists $y\in N_{S(G,t)}(x)\cap (S_2\setminus S_2')$. On the other side, if  $z\in N_{S(G,t)}(x)\cap S_2''$, then $z=wv_2v_2$, where $v_2\in D_2$ and $u=v_2$, and so $v_2\sim v_0$,  which implies that $x=wv_2v_0\sim wv_0v_2$, and we know that $g_2(wv_0v_2)=g_1(wv_0v_2)=g(wv_0v_2)=2.$

Now, if $x\in S''_2$, then there exists $w\in V^{t-2}$ and $v \in D_2$ such that $x=wvv$. So, by definition of $D_2$,  $x$ must be  adjacent to $wvu$ for some  $u \in D_2 \setminus \lbrace v \rbrace$. Hence $g_2(wvu)=g_1(wvu)=g(wvu)=f(u)=2$. 

Therefore, $g_2$ is a Roman dominating function on $S(G,t)$, and so
 $\gamma_{_R}(S(G,t)) \leq n^{t-2}(n\gamma_{_R}(G)-|B_2|-|D_2|)$.\\
\\
{Step 3}: We know that the maximum degree on $\langle B_1\rangle $ is one. Since $ D_1$ is the set of the non-isolated vertices of $\langle B_1\rangle $,  $\langle D_1 \rangle \cong \cup_{i=1}^{k} P_2$, where $k = \frac{1}{2} |D_1|$. Suppose that  $\lbrace v_1,u_1,v_2,u_2,\ldots , v_k,u_k\rbrace$ is the vertex set of 
 $\langle D_1 \rangle$, where   $ v_i\sim u_i $  for $1 \leq i \leq k$. Set 
 $S'_1=\lbrace wv_iv_i : \,  w \in V^{t-2},\,   v_i \in D_1 \text{ and } 1\leq i \leq k\rbrace$, $S''_1=\lbrace wu_iv_i :\,  w \in V^{t-2}, \, v_i,u_i \in D_1   \text{ and }  1\leq i \leq k\rbrace$ and $S'''_1=\lbrace wv_iu_i: \, wu_iv_i \in S_1'' \rbrace$. We define $g_3:V^t\rightarrow \lbrace0,1,2\rbrace$ such that 
\begin{eqnarray*}g_3(x )= \left \{ \begin{array}{ll} 

0, & x \in S'_1 \cup S''_1;
\\
2 ,& x \in S'''_1;
\\
g_2(x), & \, otherwise.
\end{array} \right
.\end{eqnarray*}
Notice that $S'''_1$ is a dominating set for $S'_1 \cup S''_2$. So $g_3$ is a Roman dominating function on $S(G,t)$. Also $\omega(g_3)=\omega (g_2)-|S'_1|-|S''_1|+|S'''_1|$ and $|S'_1|=\frac{n^{t-2}}{2} |D_1|$. Hence $\gamma_{_R}(S(G,t)) \leq n^{t-2}(n\gamma_{_R}(G) -|B_2|-|D_2|-\frac{1}{2} |D_1|)$. \\
We know that there are not any edges between $B_1$ and $B_2$. So $|D_{12}|=|D_1|+|D_2|$. Hence $\gamma_{_R}(S(G,t)) \leq n^{t-2}( n \gamma_{_R}(G)-|B_2|-|D_{12}|+\frac{1}{2} |D_1| )$.\\
\\
{Step 4}: Let $B_2'=\lbrace v \in B_2:  |N_G(v) \cap B_0|=2 \text{ and } d_G(v,u)=2 \text{ for some } u \in B_1 \setminus D_1 \rbrace$. Let $\Pi$ be the set of paths  $v_0, w_2, w_0,w_1$ in  $G$ such that  $w_2 \in B_2'$, $v_0,w_0 \in B_0$ and $w_1 \in B_1\setminus D_1$. Given two vertices $x,y\in V$,  we say that $\mu(x,y)=(i,j)$ if there exist  a path $v_0, w_2, w_0,w_1$ in $\Pi$ such that $x$ and $y$ are (from the left) in position $i$ and $j$, respectively.   
We define the following sets.
\vspace{-0,7cm}
\begin{verse}
\item $A_1=\lbrace wxy :   w \in V^{t-2} \text{ and } \mu(x,y)=(3,4) \rbrace$, 
\item $A_2=\lbrace wxy :   w \in V^{t-2} \text{ and } \mu(x,y)=(4,4) \rbrace$,
\item $A_3=\lbrace wxy :   w \in V^{t-2} \text{ and } \mu(x,y)=(4,2) \rbrace$,
\item $A_4=\lbrace wxy :   w \in V^{t-2} \text{ and } \mu(x,y)=(4,1) \rbrace$,
\item $A_5=\lbrace wxy :   w \in V^{t-2} \text{ and } \mu(x,y)=(4,3) \rbrace$.
\end{verse}

Notice that $|A_2|=\theta$ and  $A_i\cap A_j = \emptyset $, for all $i\ne j$, $i,j\in \{1,\dots,5\}$.
Also, since the weight of $f$ is minimum, for every $w_2\in B_2'$ there exists exactly one vertex $w_1\in B_1\setminus D_1$ such that $d_G(w_2,w_1)=2$. Hence, $|A_3|=|B_2'|$. Furthermore, since $|N_G(w_2)\cap B_0|=2$, we can conclude that 
$|A_1|=|A_4|=|A_5|$. On the other hand, suppose that there are two different paths 
$v_0,w_2,w_0,w_1$ and $v_0,w_2',w_0',w_1$ in $\Pi$. In such a case, the weight of the cycle $v_0,w_2,w_0,w_1,w_0',w_2',v_0 $ equals $5$ and we can give a new $\gamma_{_R}(G)$-function where the weight is $4$, as we can consider  that $v_0$ and $w_1$ have label  $2$ and the remaining vertices have label $0$, which is a contradiction with the minimality of $f$. Hence, $|A_4|=|B_2'|$. 
Now, define the function  $g_4:V^t \rightarrow \lbrace0,1,2\rbrace$ such that\\
\begin{eqnarray*}g_4(v )= \left \{ \begin{array}{ll} 

0, & v \in A_1 \cup A_2 \cup A_3 ;
\\
1, & v \in A_4;
\\
2, & v \in A_5;
\\
g_3(v), & \, \text{otherwise}.
\end{array} \right
.\end{eqnarray*}

\begin{figure}[h]
\centering
\begin{tikzpicture}
[line width=1pt,scale=1]
\draw[black!40](6,2)--(8,2);
\draw[black!40](8,2)--(10,2);
\draw[black!40](8,2)--(8,1);
\draw[black!40](8,2)--(8,1);
\draw[black!40](8,0)--(8,1);
\coordinate [label=right:{$ww_1w_2$}] (e) at (8,1);
\coordinate [label=right:{$ww_0w_1$}] (f) at (5.3,2.3);
\coordinate [label=right:{$ww_1w_1$}] (h) at (9.4,2.3);
\coordinate [label=right:{$ww_1v_0$}] (h) at (8,0);
\coordinate [label=right:{$ww_1w_0$}] (h) at (7.4,2.3);
\coordinate [label=right:{$0$}] (e) at (7.4,1);
\coordinate [label=right:{$0$}] (f) at (5.7,1.7);
\coordinate [label=right:{$0$}] (h) at (9.7,1.7);
\coordinate [label=right:{$1$}] (h) at (7.4,0.0);
\coordinate [label=right:{$2$}] (h) at (7.4,1.7);
\filldraw[fill opacity=0.9,fill=black]  (6,2) circle (0.07cm);
\filldraw[fill opacity=0.9,fill=black]  (8,1) circle (0.07cm);
\filldraw[fill opacity=0.9,fill=black]  (8,2) circle (0.07cm);
\filldraw[fill opacity=0.9,fill=black]  (10,2) circle (0.07cm);
\filldraw[fill opacity=0.9,fill=black]  (8,0) circle (0.07cm);
[line width=1pt,scale=1]
\draw[black!40](-2,2)--(0,2);
\draw[black!40](0,2)--(2,2);
\draw[black!40](0,2)--(0,1);
\draw[black!40](0,2)--(0,1);
\draw[black!40](0,0)--(0,1);
\coordinate [label=right:{$ww_1w_2$}] (e) at (0,1);
\coordinate [label=right:{$ww_0w_1$}] (f) at (-2.7,2.3);
\coordinate [label=right:{$ww_1w_1$}] (h) at (1.4,2.3);
\coordinate [label=right:{$ww_1v_0$}] (h) at (0,0);
\coordinate [label=right:{$ww_1w_0$}] (h) at (-0.6,2.3);
\coordinate [label=right:{$2$}] (e) at (-0.6,1);
\coordinate [label=right:{$1$}] (f) at (-2.3,1.7);
\coordinate [label=right:{$1$}] (h) at (1.7,1.7);
\coordinate [label=right:{$0$}] (h) at (-0.6,0.0);
\coordinate [label=right:{$0$}] (h) at (-0.6,1.7);
\filldraw[fill opacity=0.9,fill=black]  (-2,2) circle (0.07cm);
\filldraw[fill opacity=0.9,fill=black]  (0,1) circle (0.07cm);
\filldraw[fill opacity=0.9,fill=black]  (0,2) circle (0.07cm);
\filldraw[fill opacity=0.9,fill=black]  (2,2) circle (0.07cm);
\filldraw[fill opacity=0.9,fill=black]  (0,0) circle (0.07cm);
\coordinate [label=right:{$\longrightarrow$}] (e) at (4,1);
\end{tikzpicture}
\caption{This figure shows how the labels imposed by function $g_3$ are transformed by function $g_4$.} \label{Step 4}
\end{figure}
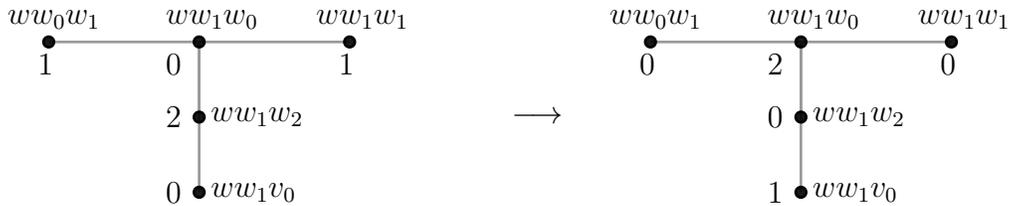

Notice that $ A_5$ is a dominating set for $A_1 \cup A_2 \cup A_3 $. So $g_4$ is a Roman dominating function on $S(G,t)$ (See Figure \ref{Step 4}). 
 Then 
\begin{align*}
 \omega(g_4) &= 2|A_5|+|A_4|+\omega(g_3)-|A_1|-|A_2|-2|A_3|\\
                     &=\omega(g_3)-\theta \\
                     &\le n^{t-2}\left ( n \gamma_{_R}(G)-|B_2|-|D_{12}|-\theta +\frac{|D_1|}{2} \right),
\end{align*} 
 as required. 
\end{proof}

As we can see in Theorems \ref{RomanSierpinskiDegree(n-1)} and  \ref{RomanSierpinskiPaths} the bound above is achieved for any graph having exactly one universal vertex and for                                                                                                                                                                                                                                                                                    any path $P_n$, where  $n\equiv 0,1  \pmod 3$.

Since  any Roman graph has a $\gamma_{_R}$-function $f=(B_0,\emptyset,B_2)$, we can state the following particular case of Theorem \ref{UpBoundRomanDomSierpinski}.
\begin{corollary}\label{UpBoundRomanDomSierpinskiRomanGraphs} For any    Roman graph $G$ of order $n$ and any integer $t \geq 2$,
$$\gamma_{_R}(S(G,t)) \leq \gamma(G) n^{t-2}(2n-1).$$
\end{corollary}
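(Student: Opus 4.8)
The plan is to derive the corollary directly by specializing the general upper bound of Theorem~\ref{UpBoundRomanDomSierpinski} to a particularly convenient $\gamma_{_R}$-function. Since $G$ is a Roman graph, Theorem~\ref{lema-RomanGraph} guarantees the existence of a $\gamma_{_R}$-function of the form $f=(B_0,\emptyset,B_2)$, i.e. one with $B_1=\emptyset$. I would fix such an $f$ and feed it into the bound of Theorem~\ref{UpBoundRomanDomSierpinski}; the whole argument is then just a matter of checking which terms survive.

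The first step is to read off the auxiliary quantities attached to $f$. Because $B_1=\emptyset$, the induced subgraph $\langle B_1\rangle$ has no vertices at all, so its set of non-isolated vertices is empty and $|D_1|=0$. For the same reason $\theta=0$: by definition $\theta$ counts certain vertices lying in $B_1\setminus D_1$, and this set is empty. The only remaining subtracted terms are $|B_2|$ and $|D_{12}|$, and I would simply retain $|D_{12}|\ge 0$ to be discarded at the end.

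The second step is to identify $|B_2|$ and $\gamma_{_R}(G)$ in terms of $\gamma(G)$. Since $f$ is a $\gamma_{_R}$-function with $B_1=\emptyset$, its weight is $\gamma_{_R}(G)=2|B_2|+|B_1|=2|B_2|$. On the other hand, $G$ being Roman means $\gamma_{_R}(G)=2\gamma(G)$ by definition. Comparing these gives $|B_2|=\gamma(G)$, and of course $\gamma_{_R}(G)=2\gamma(G)$.

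Finally, substituting $|D_1|=0$, $\theta=0$, $|B_2|=\gamma(G)$ and $\gamma_{_R}(G)=2\gamma(G)$ into the inequality of Theorem~\ref{UpBoundRomanDomSierpinski} yields
\begin{align*}
\gamma_{_R}(S(G,t)) &\le n^{t-2}\bigl(n\gamma_{_R}(G)-|B_2|-|D_{12}|-\theta+\tfrac{1}{2}|D_1|\bigr)\\
&= n^{t-2}\bigl(2n\gamma(G)-\gamma(G)-|D_{12}|\bigr)\\
&\le \gamma(G)\,n^{t-2}(2n-1),
\end{align*}
where the last inequality drops the nonnegative term $|D_{12}|$. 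I do not expect any genuine obstacle here; the entire content is the observation that choosing a $\gamma_{_R}$-function with $B_1=\emptyset$ forces the three terms $\theta$, $\frac{1}{2}|D_1|$ to vanish, so the only bookkeeping point to get right is the identity $|B_2|=\gamma(G)$ coming from the Roman condition.
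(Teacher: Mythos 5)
Your proposal is correct and is exactly the paper's argument: the paper derives this corollary by noting (via Theorem~\ref{lema-RomanGraph}) that a Roman graph admits a $\gamma_{_R}$-function $f=(B_0,\emptyset,B_2)$ and specializing Theorem~\ref{UpBoundRomanDomSierpinski} to it, which is precisely your route. Your bookkeeping ($|D_1|=0$, $\theta=0$, $|B_2|=\gamma(G)$, $\gamma_{_R}(G)=2\gamma(G)$, discarding $|D_{12}|\ge 0$) correctly fills in the substitution the paper leaves implicit.
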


\section{Graphs having exactly one universal vertex}

\begin{theorem}\label{RomanSierpinskiDegree(n-1)}
If $G$ is a graph of order $n\ge 4$ having exactly one vertex of degree $n-1$, then for any integer $t \geq 2$, $\gamma_{_R}(S(G,t))=n^{t-2}(2n-1).$ 
\end{theorem}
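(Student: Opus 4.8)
The plan is to prove the two inequalities separately, the upper bound being immediate and the lower bound being the real content. For the upper bound, observe that the unique vertex $v$ of degree $n-1$ is universal, so $\{v\}$ is a dominating set, $\gamma(G)=1$, and $f=(V\setminus\{v\},\emptyset,\{v\})$ is a $\gamma_{_R}$-function; hence $\gamma_{_R}(G)=2=2\gamma(G)$ and $G$ is a Roman graph. Corollary \ref{UpBoundRomanDomSierpinskiRomanGraphs} then yields directly $\gamma_{_R}(S(G,t))\le \gamma(G)\,n^{t-2}(2n-1)=n^{t-2}(2n-1)$. So everything reduces to the matching lower bound $\gamma_{_R}(S(G,t))\ge n^{t-2}(2n-1)=2n^{t-1}-n^{t-2}$.

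For the lower bound I would use the decomposition of $S(G,t)$ into the $n^{t-1}$ pairwise disjoint bottom copies $\langle V_w\rangle\cong G$, $w\in V^{t-1}$, together with the edges leaving them. The key structural facts are that each vertex has \emph{at most one} incident edge leaving its bottom copy (so these ``leaving edges'' form a matching $M$), and that, by the Remark in the introduction, the only vertices of $\langle V_w\rangle$ incident with such an edge are the extreme vertex of $\langle V_w\rangle$ and its neighbours. Fix a $\gamma_{_R}$-function $g$, set $c_w=\sum_{y\in V}g(wy)$ so that $\omega(g)=\sum_w c_w$, and let $T_w$ be the set of $0$-valued vertices of $V_w$ dominated from outside $\langle V_w\rangle$. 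Since every $0$-vertex of $V_w$ not in $T_w$ must have a $2$-neighbour inside $\langle V_w\rangle$, one gets the local bound $c_w\ge \min\{2,\,n-|T_w|\}$; hence the saving $s_w:=\max\{0,2-c_w\}$ satisfies $s_w\le\max\{0,|T_w|-(n-2)\}$ and is positive only if $|T_w|\ge n-1$. As $T_w$ lies in the closed neighbourhood of the extreme vertex, $|T_w|\le 1+d_G(p_w)$ where $p_w\in V$ is the position of that extreme vertex, and the hypothesis (only $v$ has degree $n-1$) forces $p_w=v$ whenever $s_w>0$. The lower bound is thus equivalent to $\sum_w s_w\le n^{t-2}$.

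The base case $t=2$ already exhibits the mechanism: there the extreme vertices $xx$ carry no leaving edge, so $T_x\subseteq N_G(xx)$ gives $|T_x|\le d_G(x)$; a saving forces $d_G(x)\ge n-1$, i.e.\ $x=v$, whence $|T_v|\le n-1$ and $s_v\le 1$ while $s_x=0$ for $x\ne v$. Therefore $\sum_x s_x\le 1=n^{0}$ and $\gamma_{_R}(S(G,2))\ge 2n-1$, the extremal function placing $2$ on every copy $xv$ except on $vv$, which receives $1$.

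I expect the inductive step to be the main obstacle, because the savings arise from inter-copy domination, which is itself recursive. Writing $S(G,t)=\bigcup_{x\in V}H_x$ with $H_x\cong S(G,t-1)$, the top-level edges of $M$ join extreme vertices of distinct copies $H_x$. The naive argument—restricting $g$ to $H_x$, raising each externally dominated extreme vertex to $1$ to obtain a genuine Roman dominating function of $H_x$, and summing the inductive bounds—gives only $\omega(g)\ge n^{t-2}(2n-1)-\sum_x|E_x|$, where $E_x$ is the set of raised vertices; this is the inequality in the wrong direction, the trouble being that every such freeloading vertex is matched by $M$ to a $2$-labelled extreme vertex in a neighbouring copy whose weight is not yet accounted for. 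The fix I would pursue is to strengthen the inductive hypothesis to an amortized statement of the form $\omega(g|_{H_x})+p_x-q_x\ge n^{t-3}(2n-1)$, where $p_x$ counts the externally dominated $0$-valued extreme vertices of $H_x$ and $q_x$ counts the $2$-valued extreme vertices of $H_x$ that dominate a vertex outside $H_x$. Since $M$ is a matching, $\sum_x p_x=\sum_x q_x$, so summing makes the boundary terms cancel and delivers exactly $\omega(g)\ge n^{t-2}(2n-1)$. Establishing this amortized inequality—equivalently, that a $2$ placed on an extreme vertex to assist a neighbour in an adjacent copy always creates enough internal surplus to pay for that assistance—is the crux; I would verify it by re-running the per-copy analysis above with the freeloader set and the external-worker set tracked through the recursion, the universal vertex $v$ again supplying the single genuine unit of saving at each level.
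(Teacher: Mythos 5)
Your upper bound is correct and coincides with the paper's: $G$ is Roman with $\gamma(G)=1$, so Theorem \ref{UpBoundRomanDomSierpinski} (or Corollary \ref{UpBoundRomanDomSierpinskiRomanGraphs}) gives $\gamma_{_R}(S(G,t))\le n^{t-2}(2n-1)$. Your base case $t=2$ is also sound, and for exactly the reason you give: there the extreme vertex $xx$ of each bottom copy carries no leaving edge, so $|T_x|\le d_G(x)$, a saving genuinely forces $x=v$, and $s_v\le 1$ because the vertex outside $N[vv]$ does not exist but the bound $|T_v|\le n-1$ still leaves $c_v\ge 1$.

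The gap is in the general-$t$ lower bound, precisely where you flag the ``crux,'' and it is twofold. First, your claim that $s_w>0$ forces $p_w=v$ does not follow from your own estimate: since for $t\ge 3$ the extreme vertex of a bottom copy can itself carry a leaving edge, $|T_w|\le 1+d_G(p_w)$ together with $|T_w|\ge n-1$ only yields $d_G(p_w)\ge n-2$, and the hypothesis ``exactly one vertex of degree $n-1$'' permits many vertices of degree $n-2$ (take $n$ odd and $G=K_n$ minus a perfect matching on $V\setminus\{v\}$: every vertex other than $v$ has degree $n-2$, so \emph{every} copy passes your local test and the candidate savings far exceed the budget $n^{t-2}$). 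Worse, for $t\ge 3$ a copy ending in $v$ can a priori have all $n$ of its vertices externally dominated, i.e.\ $c_w=0$ and $s_w=2$, so even the copies you do allow can overspend. Second, the amortized inductive inequality $\omega(g|_{H_x})+p_x-q_x\ge n^{t-3}(2n-1)$ that is supposed to repair this is asserted, not proved, and it cannot be obtained by ``re-running the per-copy analysis,'' since that analysis is what just failed. The paper closes exactly these two holes without induction, by a single global count over all $n^{t-1}$ bottom copies, using two devices absent from your proposal: a pairing argument (if a copy not ending at the universal letter has weight exactly $1$, then a specific neighbouring copy, reached through the matched $2$-labelled vertices that dominate it, is forced to have weight at least $3$ --- this is what neutralizes the degree-$(n-2)$ endings), and an exchange argument that re-labels the $\gamma_{_R}$-function so that every copy ending at the universal letter may be assumed to have weight exactly $1$, killing the $c_w=0$, $s_w=2$ scenario. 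Your savings framework is a reasonable reformulation of the problem, but without these compensation mechanisms the inequality $\sum_w s_w\le n^{t-2}$ is not established, so the proof as proposed is incomplete.
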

\begin{proof}
By Theorem \ref{UpBoundRomanDomSierpinski} we deduce that $\gamma_{R}(S(G,t)) \leq  n^{t-2}(2n-1).$ We will show that for any $\gamma_{_R}$-function
$f=(B_0,B_1,B_2)$ on $S(G,t)$, $\omega (f) \ge n^{t-2}(2n-1).$ Let $V=\lbrace 0,1,\ldots, n-1 \rbrace$ such that $deg(0)=n-1.$
We would point out  that for any $w\in V^{t-2}$, $i\in V$ and $t\ge 3$,  the subgraph $\langle V_{wi} \rangle$ of $S(G,t)$, induced by $V_{wi}=\{wij:\; j\in V\}$, is isomorphic to $G$. 
Let $\lambda (V_{wi}) = |\lbrace wij \in V_{wi}: \, deg(wij) \neq deg(j ) \rbrace |.$ There are two general cases.\\
{Case I.}   $i\ne 0.$ In this case $1 \leq \lambda(V_{wi}) \leq n-1$. So there is $wij \in V_{wi}$ such that $deg(wij)=deg(j)$ for $1 \leq j \leq n-1.$ If $B_2 \cap V_{wi} \neq \emptyset$, then $\omega(V_{wi}) \geq 2.$ Otherwise, $wij \in B_1$ and $\omega(V_{wi}) \geq 1.$ If $\omega(V_{wi})=1$, then $f(wik)=0$ for $k \in V \setminus \lbrace j \rbrace.$ Let $l \in V \setminus \lbrace 0,i,j \rbrace$. Then $wil \in N(wli)$ where $wli \in B_2.$ Since $l\ne 0$, $\lambda(V_{wl}) \leq n-1$, and so there is $wll' \in V_{wl} \cap (B_1 \cup B_2)$ such that $l' \neq i.$ Hence $\omega(V_{wl}) \geq 3$. This shows that $\omega(V_{wi})+\omega(V_{wl}) \geq 4.$ Therefore, for every copy of $G$ of weight $1$ there is another copy of $G$ of weight at least $3.$ Since there are $n^{t-2}(n-1) $ copies of $G$ of this type in $S(G,t)$, the contribution of these copies of $G$ to $\omega (f)$ equals $\displaystyle\sum_{w \in V^{t-2}}  \sum_{i=1}^{n-1} \omega (V_{wi})\geq 2n^{t-2}(n-1).$ \\
{ Case II.} $i=0.$ Then $n-1 \leq \lambda(V_{w0}) \leq n.$ If $V_{w0} \nsubseteq B_0$, then $\omega (V_{w0}) \geq 1.$ Suppose that $\omega(V_{w0})=0.$ Hence $ \lambda(V_{w0})=n$ and so $w00 \in N(w'jj)$ for $w' \in V^{t-2}$ and $j\ne 0.$ Since $f$ is a $\gamma_R$-function, $w'jj \in B_2.$ Also $deg(j) < n-1$, so there is $z\in V\setminus \{0,j\}$  such that $w'jz \not \in N(w'jj)$ and $deg(w'jz)= deg(z).$ Hence, $f(w'jz)\in \{  0,1\}$. If $f(w'jz)=1$, then we can move  the label $2$ from $w'jj$ to $w'00$ and the label $1$ from $w'jz$ to $w00$. The function obtained in this manner is a $\gamma_{_R}$-function on  $S(G,t)$, and so we can assume that $f$ is defined in this manner, \textit{i.e.,} $\omega(V_{w0})=1$. Now, 
If $f(w'jz)=0$, then we have two possibilities. Either $f(w'j0)=2$ or  $f(w'jl)=2$, for some $l\in N(z)$. The case  $f(w'j0)=2$ is impossible, as we can put the label $1$ to $w00$ and the label $0$ to $w'jj$, and the function obtained is a Roman dominating function of weight less than $f$, which is a contradiction. Finally,  if $f(w'jl)=2$, then we can modify the following weights: we put label $2$ to $w'j0$, label $0$ to $w'jl$ label $1$ to $w00$, label $0$ to $w'jj$ and, if $l\in N(j)$, then we put label  $1$ to $w'lj$. The function obtained in this manner is a $\gamma_{_R}$-function on  $S(G,t)$, and so we can assume that $f$ is defined in this manner, \emph{i.e.,} $\omega(V_{w0})=1$.
 So $\displaystyle \sum_{w \in V^{t-2}} \omega (V_{w0})\geq n^{t-2}.$ 
 
Therefore,  $\gamma_{_R}(S(G,t))=\omega (f) \geq n^{t-2}+2n^{t-2}(n-1)=n^{t-2}(2n-1).$ The proof is completed. 
\end{proof}

Since any graph of order $n$  having  at most one vertex of degree greater than or equal to $n-2$ is the subgraph of a graph of order $n$ having exactly one vertex of degree $n-1$, Remark \ref{RomanSubgraph} and Theorem \ref{RomanSierpinskiDegree(n-1)} lead to the following result. 

\begin{theorem}
If $G$ is a graph of order $n\ge 4$  having  at most one vertex of degree greater than or equal to $n-2$, then for any integer $t \geq 2$, $\gamma_{_R}(S(G,t))\ge n^{t-2}(2n-1).$ 
\end{theorem}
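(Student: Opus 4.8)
The plan is to realize $G$ as a spanning subgraph of a suitable graph $H$ of order $n$ having exactly one vertex of degree $n-1$, and then to transport the exact value $\gamma_{_R}(S(H,t))=n^{t-2}(2n-1)$ furnished by Theorem~\ref{RomanSierpinskiDegree(n-1)} down to $S(G,t)$ via the monotonicity recorded in Remark~\ref{RomanSubgraph}. The construction of $H$ is the only real content; once it is in place, the inequality follows by a short chain of comparisons, and it is precisely this embedding that the sentence preceding the statement is invoking.

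First I would build $H$. Choose a distinguished vertex $v_0\in V$: if $G$ has a (necessarily unique) vertex of degree at least $n-2$, let $v_0$ be that vertex; otherwise let $v_0$ be arbitrary. Form $H=(V,E')$ from $G$ by adding every missing edge incident with $v_0$, so that $v_0$ becomes adjacent to all other $n-1$ vertices and hence $\deg_H(v_0)=n-1$. The point to verify is that no other vertex becomes universal. By hypothesis every vertex $u\ne v_0$ satisfies $\deg_G(u)\le n-3$, since $v_0$ absorbs the unique possible vertex of degree at least $n-2$. Adding the edges at $v_0$ raises each such degree by at most $1$, so $\deg_H(u)\le n-2<n-1$ for every $u\ne v_0$. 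Thus $H$ has exactly one vertex of degree $n-1$, and since $n\ge 4$ and $t\ge 2$, Theorem~\ref{RomanSierpinskiDegree(n-1)} applies to give $\gamma_{_R}(S(H,t))=n^{t-2}(2n-1)$.

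Next I would check that passing from $G$ to $H$ lifts to the Sierpi\'{n}ski level. Since $E\subseteq E'$ and both graphs share the vertex set $V$, the edge-set description $\{\{wu_iu_j^{r-1},wu_ju_i^{r-1}\}:\{u_i,u_j\}\in E;\ r\in\{1,\dots,t\};\ w\in V^{t-r}\}$ shows that every edge of $S(G,t)$ is also an edge of $S(H,t)$; the two Sierpi\'{n}ski graphs have the common vertex set $V^t$, so $S(G,t)$ is a spanning subgraph of $S(H,t)$. Applying Remark~\ref{RomanSubgraph} with the larger graph $S(H,t)$ and its spanning subgraph $S(G,t)$ yields $\gamma_{_R}(S(H,t))\le\gamma_{_R}(S(G,t))$. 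Combining this with the value from Theorem~\ref{RomanSierpinskiDegree(n-1)} gives $\gamma_{_R}(S(G,t))\ge\gamma_{_R}(S(H,t))=n^{t-2}(2n-1)$, as claimed.

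The only delicate point is the degree bookkeeping in the construction of $H$: one must use the hypothesis ``at most one vertex of degree $\ge n-2$'' exactly to guarantee that making $v_0$ universal pushes every remaining degree no higher than $n-2$, so that $H$ genuinely carries a single universal vertex rather than several (which would put it outside the scope of Theorem~\ref{RomanSierpinskiDegree(n-1)}). Everything else is a formal consequence of the edge-set definition of $S(G,t)$ together with the two cited results.
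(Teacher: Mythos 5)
Your proposal is correct and takes essentially the same route as the paper: the paper's proof is the single observation that any such $G$ is a spanning subgraph of a graph $H$ of order $n$ with exactly one vertex of degree $n-1$, whence Remark~\ref{RomanSubgraph} and Theorem~\ref{RomanSierpinskiDegree(n-1)} give $\gamma_{_R}(S(G,t))\ge \gamma_{_R}(S(H,t))=n^{t-2}(2n-1)$. Your write-up simply makes explicit two details the paper leaves implicit, namely the degree bookkeeping showing $H$ has a unique universal vertex and the fact that $E\subseteq E'$ forces $S(G,t)$ to be a spanning subgraph of $S(H,t)$, both of which you verify correctly.
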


\section{The particular case of paths}
Notice that $S(P_2,t)\cong P_{2^t}$ and so $\gamma_{_R}(S(P_2,t))=\left\lceil \frac{2^{t+1}}{3} \right\rceil$.  From now on we assume that $n\ge 3$. Let  $V=\lbrace 1,2,\ldots,n \rbrace$ be the vertex set of $P_n$, and $\langle V_{wu} \rangle$ a copy of $P_n$ in $S(P_n,t)$  for $w \in V^{t-2}$ and $u \in V$.
Set 
\begin{eqnarray*}A_{wu}= \left \{ \begin{array}{ll} 
 \lbrace wui \in V_{wu} : \, i< u-1 \rbrace, & 3 \leq u \leq n \; ;\\
\\
\emptyset, &   u = 1, 2.

\end{array} \right
.\end{eqnarray*}
\begin{eqnarray*}B_{wu}= \left \{ \begin{array}{ll} 
 \lbrace wuj \in V_{wu} : \, j > u+1 \rbrace, & 1 \leq u \leq n-2 \; ;\\
\\
\emptyset, &  u = n- 1, n.

\end{array} \right
.\end{eqnarray*}\\
Also, let
 $$D_i= \left \{\langle V_{wu} \rangle : \, \omega(V_{wu})=\left \lceil \frac{2|A_{wu}|}{3} \right \rceil +\left \lceil \frac{2|B_{wu}|}{3} \right \rceil +i \right\},\, \text{ for  }i\in \{0,1\}$$ 
 and 
 $$D_2= \left \{\langle V_{wu} \rangle : \, \omega(V_{wu})=\left \lceil \frac{2|A_{wu}|}{3} \right \rceil +\left \lceil \frac{2|B_{wu}|}{3} \right \rceil +j, \text{ for some } j\ge 2 \right\},$$
 where the weight $\omega(V_{wu})$ corresponds  to a labelling defined by a $\gamma_{_R}$-function on $S(P_n,t)$.
Also set $\Lambda= \{ \langle V_{wu} \rangle: \, deg(wuu)\neq deg(u) \text{ for }   1 \leq u \leq n \}$. 
With these notations in mind we will prove the following Lemmas.

\begin{lemma}\label{weightV_wu} Let $f=(B_0,B_1,B_2)$ be a $\gamma_{_R}$-function on $S(P_n,t)$, where $n\ge 3$. For any   $w \in V^{t-2}$ and $u \in V$ there exists $i \geq 0$ such that $\langle V_{wu} \rangle \in D_i$, and $i \geq 1$ whenever $V_{wu} \notin \Lambda$.
\end{lemma}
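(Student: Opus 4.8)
The plan is to exploit the fact that $\langle V_{wu}\rangle$ is a copy of the path $wu1\sim wu2\sim\cdots\sim wun$ with extreme vertex $wuu$, and to reduce everything to the formula $\gamma_{_R}(P_m)=\lceil 2m/3\rceil$ from Theorem \ref{RomanDominationPn}. Write $a=|A_{wu}|$ and $b=|B_{wu}|$, so that $A_{wu}=\{wu1,\dots,wu(u-2)\}$ and $B_{wu}=\{wu(u+2),\dots,wun\}$ are the two sub-paths flanking the middle $\{wu(u-1),wuu,wu(u+1)\}$. First I would record the structural fact, coming from the Remark asserting that every edge joining $V_{wu}$ to $V^t\setminus V_{wu}$ is incident with $wuu$ or with a neighbour of $wuu$: since the neighbours of $wuu$ in this copy are exactly $wu(u-1)$ and $wu(u+1)$, every vertex of $A_{wu}$ has its whole closed neighbourhood inside $R_A:=A_{wu}\cup\{wu(u-1)\}$, and every vertex of $B_{wu}$ inside $R_B:=B_{wu}\cup\{wu(u+1)\}$. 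Because $f$ Roman-dominates $S(P_n,t)$, this forces $f|_{R_A}$ to Roman-dominate $A_{wu}$ and $f|_{R_B}$ to Roman-dominate $B_{wu}$.

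The technical heart consists of two sub-bounds, which I prove for $R_A$ (the statements for $R_B$ being symmetric). \emph{Basic bound:} $f(R_A)\ge\lceil 2a/3\rceil$. Indeed, if $f(wu(u-1))\ge1$ then $f|_{R_A}$ is a genuine Roman dominating function of $R_A\cong P_{a+1}$, so $f(R_A)\ge\gamma_{_R}(P_{a+1})=\lceil 2(a+1)/3\rceil\ge\lceil 2a/3\rceil$; if $f(wu(u-1))=0$ then $wu(u-1)$ offers no domination, so $f|_{A_{wu}}$ already Roman-dominates $A_{wu}\cong P_a$ and $f(R_A)=f(A_{wu})\ge\lceil 2a/3\rceil$. \emph{Refined bound:} if $f(wu(u-1))=2$ then $f(R_A)\ge\lceil 2a/3\rceil+1$. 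Here $wu(u-1)$ can only dominate the endpoint $wu(u-2)$ of $A_{wu}$, so every vertex of $A_{wu}$ except possibly $wu(u-2)$ is already internally Roman-dominated by $f|_{A_{wu}}$; raising the value of the free endpoint to at least $1$ produces a Roman dominating function of $P_a$ at extra cost $\le1$, whence $\gamma_{_R}(P_a)\le f(A_{wu})+1$, i.e. $f(A_{wu})\ge\lceil 2a/3\rceil-1$, and $f(R_A)=f(A_{wu})+2\ge\lceil 2a/3\rceil+1$. Summing the basic bounds and writing $\omega(V_{wu})=f(R_A)+f(R_B)+f(wuu)$ gives $\omega(V_{wu})\ge\lceil 2a/3\rceil+\lceil 2b/3\rceil+f(wuu)$, which already places $\langle V_{wu}\rangle$ in some $D_i$ with $i\ge0$.

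For the refinement $i\ge1$ when $\langle V_{wu}\rangle\notin\Lambda$, note that $\langle V_{wu}\rangle\notin\Lambda$ means $\deg(wuu)=\deg(u)$, so $wuu$ has no external neighbour and must be dominated inside the copy. If $f(wuu)\ge1$ the displayed inequality already supplies the extra unit; otherwise $f(wuu)=0$ forces $f(wu(u-1))=2$ or $f(wu(u+1))=2$, and the corresponding refined sub-bound yields $\omega(V_{wu})\ge\lceil 2a/3\rceil+\lceil 2b/3\rceil+1$. Finally I would dispose of the boundary values $u\in\{1,2,n-1,n\}$ (and the small case $n=3$), where $A_{wu}$ or $B_{wu}$ is empty and one or two middle vertices are absent; in each such case the relevant sub-bound is vacuous ($\lceil0\rceil=0$) and the argument goes through verbatim with $R_A$ (or $R_B$) shrunk to a single vertex or to $\emptyset$. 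I expect the refined sub-bound $f(R_A)\ge\lceil 2a/3\rceil+1$ under $f(wu(u-1))=2$ to be the main obstacle: it is \emph{not} a direct consequence of $\gamma_{_R}(P_{a+1})$, which is one unit too small precisely when $a\equiv2\pmod3$, so it genuinely requires the ``pre-dominated endpoint'' comparison with $\gamma_{_R}(P_a)$, and the bookkeeping of which middle vertex pays for the extra unit must be kept consistent across all positions of $u$.
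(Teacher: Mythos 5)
Your proof is correct and takes essentially the same route as the paper's: you decompose $\omega(V_{wu})$ into the contributions of $A_{wu}$, $B_{wu}$ and the (at most three) middle vertices, apply $\gamma_{_R}(P_m)=\left\lceil \frac{2m}{3}\right\rceil$ to the flanking sub-paths, and recover a deficient side through a forced label $2$ on the adjacent middle vertex, which is exactly the paper's compensation step, with the case $V_{wu}\notin\Lambda$ handled by internally dominating the extreme vertex $wuu$. Your two explicit sub-bounds --- in particular the ``pre-dominated endpoint'' comparison with $\gamma_{_R}(P_a)$ when $f(wu(u-1))=2$ --- simply make rigorous what the paper compresses into the line ``$f(wu(u-2))=0$, $f(wu(u-3))\le 1$, and so $f(wu(u-1))=2$''.
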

\begin{proof}
Let $P_r=\langle A_{wu}\rangle$ and $P_{r'}=\langle B_{wu}\rangle$. Notice that Theorem \ref{RomanDominationPn} leads to $\gamma_{_R}(\langle A_{wu}\rangle)=\lceil \frac{2r}{3}\rceil$  and $\gamma_{_R}(\langle B_{wu}\rangle)=\lceil \frac{2r'}{3}\rceil$. If $V_{wu} \not\in \Lambda$, then $deg(wuu)=deg(u) \leq 2$. 
 Since $$\omega (V_{wu})=\displaystyle \omega ( A_{wu})+ \sum_{wui\not \in A_{wu} \cup B_{wu}}f(wui)+\omega ( B_{wu}),$$ $\omega (V_{wu})\geq \omega ( A_{wu})+\omega ( B_{wu})+1.$
 If $ \omega ( A_{wu})\geq \lceil \frac{2r}{3}\rceil $ or $ \omega ( B_{wu})\geq \lceil \frac{2r'}{3}\rceil $, then we are done.
If $  A_{wu} \neq \emptyset$ and $ \omega ( A_{wu})< \lceil \frac{2r}{3}\rceil $, then 
 $f(wu(u-2))=0$, $f(wu(u-3))\le 1$, and so $f(wu(u-1))=2$. Hence,  $ \omega ( A_{wu})+f(wu(u-1))= \lceil \frac{2(r-2)}{3}\rceil +1+2\ge  \lceil \frac{2r}{3}\rceil +1$.
 By analogy, if  $  B_{wu} \neq \emptyset$ and $ \omega ( B_{wu})<\lceil \frac{2r'}{3}\rceil $, then  $ \omega ( B_{wu})+f(wu(u+1))\ge  \lceil \frac{2r'}{3}\rceil +1$. Therefore, in any case,
 $$\omega (V_{wu})\ge \left \lceil \frac{2|A_{wu}|}{3} \right \rceil +\left \lceil \frac{2|B_{wu}|}{3} \right \rceil+1.$$
 Let $V_{wu} \in \Lambda$. Then $wuu \in N(w'vv)$ where $w' \in V^{t-2}$ and $v \in V.$ Thus, as above,
 $$\omega (V_{wu})=\displaystyle   \omega ( A_{wu})+ \sum_{wui\not \in A_{wu} \cup B_{wu}}f(wui)+ \omega ( B_{wu})\geq \left \lceil \frac{2|A_{wu}|}{3} \right \rceil +\left \lceil \frac{2|B_{wu}|}{3} \right \rceil.$$
\end{proof}

\begin{lemma}\label{D2>D0}  
 Let $V$ be the vertex set of $P_n$, $n\ge 3$, and $t$ a positive integer.
If for   some $w\in V^{t-2}$ and $u\in V$ we have that $\langle V_{wu} \rangle \in D_0$, then there exists  $w'\in V^{t-2}$ and  $v\in N_G(u)$ such that $\langle V_{w'v} \rangle \in D_2.$
\end{lemma}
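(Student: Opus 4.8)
The plan is to turn the hypothesis $\langle V_{wu}\rangle\in D_0$ into rigid information about $f$ on the three ``central'' vertices $wu(u-1),wuu,wu(u+1)$ of the copy, then to locate the vertex of label $2$ that is forced to dominate the extreme vertex $wuu$ from outside $V_{wu}$, and finally to show that this vertex sits in a neighbouring copy that must belong to $D_2$.

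First I would extract the structural consequences of $\langle V_{wu}\rangle\in D_0$. Since here $i=0$, Lemma \ref{weightV_wu} forces $V_{wu}\in\Lambda$, so the extreme vertex $wuu$ has a neighbour outside $V_{wu}$. Writing $\omega(V_{wu})=[\omega(A_{wu})+f(wu(u-1))]+f(wuu)+[f(wu(u+1))+\omega(B_{wu})]$ and invoking the inequality obtained inside the proof of Lemma \ref{weightV_wu}, namely $\omega(A_{wu})+f(wu(u-1))\ge\lceil 2|A_{wu}|/3\rceil$ with an extra $+1$ whenever $\omega(A_{wu})<\lceil 2|A_{wu}|/3\rceil$ (and symmetrically for $B_{wu}$), the equality defining $D_0$ can hold only if $f(wu(u-1))=f(wuu)=f(wu(u+1))=0$. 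In particular $wuu$ has label $0$ and both of its neighbours inside the copy have label $0$, so the vertex of label $2$ that Roman--dominates $wuu$ can only be its (unique) external neighbour $z$.

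Next I would read off $z$ from the edge description of $S(P_n,t)$. As the last two letters of $wuu$ coincide, the maximal run of the final letter has some length $p\ge 2$; letting $x$ be the first letter (scanning from the right) that differs from $u$ — which exists because $V_{wu}\in\Lambda$ rules out $wuu$ being the all-$u$ word — one has $wuu=w''xu^{p}$ with $w''\in V^{t-p-1}$. The only way $wuu$ can be an endpoint of an edge leaving its copy is through the compact edge $\{w''xu^{p},\,w''ux^{p}\}$, which exists precisely when $\{x,u\}\in E(P_n)$, i.e. when $v:=x\in N_G(u)$; its other endpoint is $z=w''ux^{p}=w'vv$, the extreme vertex of the copy $\langle V_{w'v}\rangle$ with $w'=w''ux^{p-2}\in V^{t-2}$. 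Thus $f(w'vv)=2$ with $v\sim u$.

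Finally I would bound the weight of this neighbouring copy. Applying the same argument as in the proof of Lemma \ref{weightV_wu} to $\langle V_{w'v}\rangle$ gives $\omega(A_{w'v})+f(w'v(v-1))\ge\lceil 2|A_{w'v}|/3\rceil$ and $f(w'v(v+1))+\omega(B_{w'v})\ge\lceil 2|B_{w'v}|/3\rceil$; adding the central contribution $f(w'vv)=2$ yields $\omega(V_{w'v})\ge\lceil 2|A_{w'v}|/3\rceil+\lceil 2|B_{w'v}|/3\rceil+2$, that is $\langle V_{w'v}\rangle\in D_2$, as required. I expect the main obstacle to be the third paragraph: correctly identifying the unique external neighbour of $wuu$ from the Sierpi\'nski edge set and verifying the repeated-letter bookkeeping (the value of $p$, the position of $x$, the shape of $w'$), together with the observation that $\{x,u\}$ being an edge is exactly what $V_{wu}\in\Lambda$ already guarantees. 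The weight estimates in the first and last paragraphs are routine once the bracket inequality is extracted cleanly from Lemma \ref{weightV_wu}; the only additional care is for the degenerate cases $v\in\{1,n\}$, where one of $A_{w'v},B_{w'v}$ and its boundary vertex is empty and the corresponding term simply drops out.
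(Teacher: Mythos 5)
Your proof is correct and follows essentially the same route as the paper's: force $f(wu(u-1))=f(wuu)=f(wu(u+1))=0$ from the $D_0$ equality via the bracket inequalities of Lemma \ref{weightV_wu}, identify the unique external neighbour of $wuu$ as the extreme vertex $w'vv$ of an adjacent copy with $f(w'vv)=2$ and $v\in N_G(u)$, and conclude $\omega(V_{w'v})\ge \left\lceil \frac{2|A_{w'v}|}{3}\right\rceil+\left\lceil \frac{2|B_{w'v}|}{3}\right\rceil+2$, i.e.\ $\langle V_{w'v}\rangle\in D_2$. The only difference is that you make explicit the bookkeeping (the terminal run $u^p$, the edge $\{w''xu^p,w''ux^p\}$, and the uniqueness of the external neighbour) that the paper leaves implicit in the assertion ``$wuu\in N(w'vv)$ where $w'vv\in B_2$''.
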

\begin{proof} Let $f=(B_0,B_1,B_2)$ be $\gamma_{_R}$-function on $S(P_n,t)$, and  $\langle V_{wu} \rangle \in D_0$. Then $\displaystyle \sum_{wui\not \in A_{wu} \cup B_{wu}}f(wui)=0.$ Thus, $wuu \in N(w'vv)$ where $ w'vv \in V^{t-2} \cap B_2$ for $w' \in V^{t-2}$ and $ v \in V$. Hence, $\langle V_{w'v} \rangle \in \Lambda$ and $\omega(V_{w'v}) \geq \left \lceil \frac{2|A_{w'v}|}{3} \right \rceil +\left \lceil \frac{2|B_{w'v}|}{3} \right \rceil +2.$ So, $\langle V_{w'v} \rangle \in D_2.$ 

\end{proof}

\begin{theorem}\label{RomanSierpinskiPaths}
 For any  integers $n\ge 3$ and $t \geq 2$,\\
\begin{eqnarray*}\gamma_{_R}(S(P_n,t))= \left \{ \begin{array}{ll} 
n^{t-2}\left (n \lceil \frac{2n}{3}\rceil  -\lceil \frac{n}{3}\rceil \right), & n\equiv 0,1 \; \pmod 3;\\
\\
n^{t-2}\left (n \lceil \frac{2n}{3}\rceil  -2 \lceil \frac{n}{3}\rceil+1 \right),  &  n\equiv  2 \; \pmod 3.

\end{array} \right
.\end{eqnarray*}\\
\end{theorem}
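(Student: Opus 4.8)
The plan is to prove the two inequalities $\gamma_{_R}(S(P_n,t))\le(\cdots)$ and $\gamma_{_R}(S(P_n,t))\ge(\cdots)$ separately, splitting the upper bound according to the residue of $n$ modulo $3$. Throughout I will write $\Sigma=\sum_{m=0}^{n-2}\lceil 2m/3\rceil$. The first useful observation is that, as $u$ ranges over $V$, the quantities $|A_{wu}|$ run through the multiset $\{0,0,1,\dots,n-2\}$ and the $|B_{wu}|$ through the same multiset in reverse; hence for every fixed $w\in V^{t-2}$ the total ``tail'' contribution $\sum_{u}\big(\lceil 2|A_{wu}|/3\rceil+\lceil 2|B_{wu}|/3\rceil\big)$ equals $2\Sigma$, and over all $w$ it equals $n^{t-2}\,2\Sigma$. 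Everything then reduces to controlling the excess $\omega(V_{wu})-\lceil 2|A_{wu}|/3\rceil-\lceil 2|B_{wu}|/3\rceil$, i.e.\ the memberships in $D_0,D_1,D_2$.

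For the upper bound when $n\equiv 0,1\pmod 3$ I would apply Theorem \ref{UpBoundRomanDomSierpinski} to the explicit $\gamma_{_R}$-function $f$ on $P_n$ that places a $2$ on every vertex congruent to $2\pmod 3$ and, when $n\equiv 1$, an extra $1$ on the last vertex. Here $D_1=D_2=\emptyset$, and a direct computation gives $|B_2|+|D_{12}|+\theta-\frac{1}{2}|D_1|=\lceil n/3\rceil$ (with $\theta=0$ when $n\equiv 0$ and $\theta=1$ accounting for the single terminal $1$ when $n\equiv 1$), which yields exactly $n^{t-2}(n\lceil 2n/3\rceil-\lceil n/3\rceil)$. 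For $n\equiv 2\pmod 3$ Theorem \ref{UpBoundRomanDomSierpinski} is not tight, so instead I would exhibit a Roman dominating function on $S(P_n,2)$ of weight $n\lceil 2n/3\rceil-2\lceil n/3\rceil+1$ directly, arranging each copy $\langle V_{wu}\rangle$ so that the tails $A_{wu},B_{wu}$ are dominated optimally while the central vertices $wu(u-1),wuu,wu(u+1)$ are covered through the bridge edges, and then lift it to arbitrary $t$ via $\gamma_{_R}(S(P_n,t))\le n\,\gamma_{_R}(S(P_n,t-1))$ (Remark \ref{RomanSubgraph} applied to the $n$ bridge-free copies of $S(P_n,t-1)$).

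For the lower bound I would fix a $\gamma_{_R}$-function on $S(P_n,t)$, decompose $V^t$ into the $n^{t-1}$ path-copies $\langle V_{wu}\rangle$, and write $\gamma_{_R}(S(P_n,t))=\sum_{wu}\omega(V_{wu})$. By Lemma \ref{weightV_wu} every copy lies in some $D_i$, so $\omega(V_{wu})\ge \lceil 2|A_{wu}|/3\rceil+\lceil 2|B_{wu}|/3\rceil+i_{wu}$ with $i_{wu}\ge 1$ whenever $\langle V_{wu}\rangle\notin\Lambda$. Summing the tail parts gives $n^{t-2}\,2\Sigma$, and it remains to bound $\sum_{wu}i_{wu}$ from below by $n^{t-2}\big(n\lceil 2n/3\rceil-c-2\Sigma\big)$, where $c=\lceil n/3\rceil$ or $2\lceil n/3\rceil-1$. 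Since $D_0\subseteq\Lambda$, Lemma \ref{D2>D0} attaches to each $D_0$-copy a neighbouring copy in $D_2$; after verifying this attachment can be made injective, the surplus $2$ of a $D_2$-copy compensates the deficit of its $D_0$-partner and leaves the generic estimate $\sum_{wu}i_{wu}\ge n^{t-1}$.

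The hard part is that this generic estimate is \emph{too weak}: for $n\equiv 1,2$ the required excess $E=n\lceil 2n/3\rceil-c-2\Sigma$ strictly exceeds $n$ per $w$-block (for instance $E=7,8,11$ when $n=5,7,8$), so I must show that a residue-dependent, unbounded number of copies—growing like $\lceil n/3\rceil$ per block—are forced into $D_2$ rather than $D_1$. I would obtain this by analysing, for each residue of the index $u$, how an optimal Roman domination of the tail $B_{wu}$ pins down the label of the connecting vertex $wu(u+1)$ (and symmetrically $A_{wu}$ and $wu(u-1)$), together with an ``end-pair'' type argument: if a copy achieves excess exactly $1$, then one of its bridge-neighbours is compelled to carry surplus $2$, and such forced $D_2$-copies are pairwise distinct. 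Matching the resulting count with the closed form—and, on the upper-bound side, producing the additional saving the $n\equiv 2$ construction needs beyond Theorem \ref{UpBoundRomanDomSierpinski}—is where essentially all the difficulty lies, and it is exactly this residue-dependent excess that produces the two cases in the statement.
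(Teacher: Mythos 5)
Your outline reproduces the paper's architecture exactly: the decomposition of $V^t$ into the $n^{t-1}$ path-copies $\langle V_{wu}\rangle$, the tail bookkeeping (your identity $\sum_{u}\left(\lceil 2|A_{wu}|/3\rceil+\lceil 2|B_{wu}|/3\rceil\right)=2\Sigma$ per block, with $\Sigma=\sum_{m=0}^{n-2}\lceil 2m/3\rceil$, is correct), Lemma \ref{weightV_wu}, the injective matching $D_0\rightarrow D_2$ from Lemma \ref{D2>D0}, and the upper bound for $n\equiv 0,1\pmod 3$ via Theorem \ref{UpBoundRomanDomSierpinski}. That last item you actually complete: with $B_2=\{v:v\equiv 2\pmod 3\}$ (plus a $1$ on the last vertex when $n\equiv 1$) one gets $D_1=D_{12}=\emptyset$ and $\theta\in\{0,1\}$ exactly as you compute, so $|B_2|+|D_{12}|+\theta-\frac{1}{2}|D_1|=\lceil n/3\rceil$; the paper invokes the theorem without writing this out. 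Your generic excess estimate $\sum_{wu}i_{wu}\ge n^{t-1}$ is also sound, and your arithmetic correctly locates the difficulty: the required excess per block is $n$, $4k$ and $4k+3$ for $n=3k,3k+1,3k+2$ respectively, so the generic bound settles the lower bound only for $n\equiv 0\pmod 3$, leaving deficits of $\lceil n/3\rceil-2$ and $\lceil n/3\rceil$ per block in the other two residues.

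Those deficits are precisely what you leave as a plan, so the proposal has a genuine gap at the two decisive steps. For the lower bound with $n\equiv 1$, the paper's forcing is not your conjectured bridge mechanism: it shows that if $\omega(V_{wv})$ attains the minimal value $2\lfloor n/3\rfloor+i-1$, then the copy $\langle V_{w(v-1)}\rangle$ with \emph{adjacent index in the same block}, $v-1\in S$, must exceed it, so the minimum occurs for at most $\gamma(P_n)=\lceil n/3\rceil$ indices $u$ per block. For $n\equiv 2$ one additionally needs the residue-by-residue estimates (e.g.\ $\omega(V_{wu})\ge\lceil 2(u-2)/3\rceil+\lceil 2(n-u-1)/3\rceil+1$ when $wuu\in B_1\cup B_2$) together with a separate treatment of the endpoint copies $u\in\{1,n\}$ split according to whether $\langle V_{w1}\rangle\in D_0,D_1,D_2$; only this endpoint piece resembles your ``bridge-neighbour compelled to carry surplus $2$'' idea, and even there the conclusion is of the form ``either $\omega(V_{w2})$ or some $\omega(V_{w'2})$ is large,'' which you would still have to make injective across blocks. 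On the upper-bound side for $n\equiv 2$, your lift $\gamma_{_R}(S(P_n,t))\le n\,\gamma_{_R}(S(P_n,t-1))$ is legitimate (the $n$ disjoint copies of $S(P_n,t-1)$ form a spanning subgraph, Remark \ref{RomanSubgraph}), but you never exhibit the base function of weight $6k^2+8k+3$; the paper's explicit construction (label $2$ on $A_1\cup A_2\cup A_3$, label $1$ on $C_1\cup C_2\cup C_3$, with the verification that every $0$-vertex is defended and the count $2\sum_i|A_i|+\sum_i|C_i|=n^{t-2}(6k^2+8k+3)$) is the actual content here. In short: an accurate roadmap along the same route as the paper, but the residue-dependent counting and the $n\equiv 2$ construction --- which you yourself flag as ``where essentially all the difficulty lies'' --- are asserted rather than proven.
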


\begin{proof}
We first proceed to deduce the lower bound $\gamma_{_R}(S(P_n,t))\ge n^{t-2}(n \lceil \frac{2n}{3}\rceil  -\lceil \frac{n}{3}\rceil )$.
Let  $V=\lbrace 1,2,\ldots,n \rbrace$, and  $f=(B_0,B_1,B_2)$ a $\gamma_{_R}$-function on $S(P_n,t)$. Let  $\langle V_{wu} \rangle$ be a copy of $P_n$ in $S(P_n,t)$  for $w \in V^{t-2}$ and $u \in V$. 
Since $$\gamma_{_R}(S(P_n,t))=\displaystyle \sum_{w\in V^{t-2},\,u\in V }\omega(V_{wu}),$$ we proceed to obtain a lower bounds on $\omega(V_{wu})$ in terms of $n$. 
Before doing it, notice that 
 $$
  \gamma_{_R}(S(P_n,t)) 
 =   \sum_{\langle V_{wu}\rangle\in D_0 }\omega(V_{wu})+\sum_{\langle V_{wu}\rangle\in D_1 }\omega(V_{wu})+\sum_{\langle V_{wu}\rangle\in D_ 2 }\omega(V_{wu}) 
$$
and  by Lemma \ref{D2>D0},  there exists an injective application $\psi: D_0 \longrightarrow D_2$, so that we emphasize that if $\langle V_{wu}\rangle \in D_0$, then  the contribution of $\omega(V_{wu})+\omega(\psi(\langle V_{wu}\rangle))$ to  $\gamma_{_R}(S(P_n,t)) $
is greater than or equal to its contribution when 
both $\langle V_{wu} \rangle$ and $\psi(\langle V_{wu}\rangle )$ are in $D_1$.  With this observation in mind we continue the proof.

By Lemma \ref{weightV_wu}, $\omega( V_{wu})= \left \lceil \frac{2|A_{wu}|}{3} \right \rceil +\left \lceil \frac{2|B_{wu}|}{3} \right \rceil+i $, for some $i\ge 0$. 
Hence, we now proceed to express $\left \lceil \frac{|A_{wu}|}{3} \right \rceil $ and $\left \lceil \frac{2|B_{wu}|}{3} \right \rceil $ in terms of $n$. To this end,  we consider the set $S=\lbrace x \in V : \, x \equiv 2 \, (\text{mod} \, 3 \, ) \rbrace$ and we differentiate three cases. \\
\\
\noindent Case 1: $n=3k$ for some positive integer $k$. So $S$ is a $\gamma-$set of $P_n$. 
If  $u \in S$, then $|A_{wu}|,|B_{wu}| \in \lbrace 3k' : \, 0 \leq k' \leq k-1\rbrace $ and, as  $|A_{wu}\cup B_{wu}|=n-3$, we have 
\begin{equation}\label{1}
 \omega (V_{wu})=2\frac{n-3}{3}+i= \frac{2n}{3} +i-2.
\end{equation} 
If $u \in N(S)\setminus \{1,n\}$, then  $|A_{wu}| \in \lbrace l : \, l \equiv 1(\text{mod} \, 3 ) \rbrace $ and $|B_{wu}| \in \lbrace l : \, l \equiv 2(\text{mod} \, 3 ) \rbrace $ or vice versa. 
Hence, $\omega(V_{wu}) = \frac{2n}{3}+i-1$. Notice that if $u=1$, then $A_{wu}=\emptyset $ and $|B_{wu}|\equiv 1 \pmod 3$, which implies that  $\omega(V_{wu}) = \frac{2n}{3}+i-1$. The case $u=n$ is analogous to the previous one.  
Therefore, 
 \begin{align*}
  \gamma_{_R}(S(P_n,t)) &=\sum_{w\in V^{t-2}}  \sum_{u\in V}\omega(V_{wu})\\
   &\geq n^{t-2} \left(\left(\frac{2n}{3}-1\right)\gamma(P_n)+ \frac{2n}{3}\left(n - \gamma(P_n) \right)\right)\\
   &=n^{t-2}\left( n \left\lceil \frac{2n}{3}\right\rceil  -\left\lceil \frac{n}{3}\right\rceil \right).
   \end{align*}

\noindent Case 2:  $n= 3k+1$ for any positive integer $k$. In this case, $S'=S \cup \lbrace n-1 \rbrace$ is a $\gamma$-set of $P_n$. If $\langle V_{wd}\rangle $ is a copy of $P_n$ for some  $d \in S'$,
 $|A_{wd}| \in \lbrace l : \, l \equiv 0 \pmod 3 \rbrace $ and $|B_{wd}| \in \lbrace l : \, l \equiv 1 \pmod 3 \rbrace $ or vice versa.
 Hence, 
 \begin{equation}\label{2} 
 \omega(V_{wd}) =  \left \lceil \frac{2|A_{wd}|}{3}\right \rceil +\left \lceil \frac{2|B_{wd}|}{3}\right \rceil +i = 2\left\lfloor \frac{n}{3}\right\rfloor+i-1.
\end{equation}

Let $V_{wu}$ where $u \in N(S') \setminus \lbrace 1,n \rbrace$. Hence, we have two possibilities,   $|A_{wu}|,|B_{wu}| \in \lbrace l: \, l \equiv 2   \pmod  3  \rbrace$ or 
$|A_{wu}|,|B_{wu}| \in  \lbrace l: \, l \equiv 0,1 \pmod 3 \rbrace$ where $|A_{wu}| \not \equiv |B_{wu}| \pmod 3$. In the first case, 
  $\omega(V_{wu}) \geq 2\lfloor \frac{n}{3} \rfloor +i$ and, in the second one,  $\omega(V_{wu}) \geq 2\lfloor \frac{n}{3} \rfloor +i-1$.

  Suppose that $\omega (V_{wv})=2\lfloor \frac{n}{3} \rfloor+i-1$  for $w \in V^{t-2}$ and $v \in V$. Then $\omega(V_{w(v-1)})> 2\lfloor \frac{n}{3} \rfloor+i-1$ where $v-1 \in S$. Therefore $\omega(V_{wu})$ is equal to $2\lfloor \frac{n}{3} \rfloor+i-1$ at most for $\gamma(P_n)$ copies of $P_n$, and for other copies it is more than $ 2\lfloor \frac{n}{3} \rfloor+i-1$. Hence, $$\gamma_{_R}(S(P_n,t))\geq n^{t-2} \left (2\gamma(P_n) \left \lfloor \frac{n}{3} \right \rfloor + (n-\gamma(P_n))\left(2\left \lfloor \frac{n}{3} \right \rfloor+1\right)\right)=n^{t-2}\left (n \left \lceil \frac{2n}{3}\right\rceil  -\left\lceil \frac{n}{3}\right\rceil \right).$$ 

\noindent Case 3:  $n=3k+2$ for any positive integer $k$. 
We discuss first words of the form $ wu$ where $2\leq u \leq  n-1$ and $w \in V^{t-2}$.  If $wuu \in B_2 \cup B_1$, then $\omega (V_{wu}) \geq  \lceil \frac{2(u-2)}{3}\rceil + \lceil \frac{2(n-u-1)}{3}\rceil+1$. Hence $\omega (V_{wu}) \geq 2\lfloor \frac{n}{3} \rfloor+1 $ for $u\equiv 0 \, (\text{mod} \, 3)$ and  $\omega (V_{wu}) \geq 2\lfloor \frac{n}{3} \rfloor+2 $ for others.  
Now,  suppose that $wuu \in B_0$ and $\langle V_{wu}\rangle\notin D_0$. In this case $wu(u-1) \in B_2$ or $wu(u+1) \in B_2$, say  $wu(u+1) \in B_2$. Hence, $\omega (V_{wu}) \geq \lceil \frac{2(u-2)}{3}\rceil + \lceil \frac{2(n-u-2)}{3}\rceil +2$, which implies that  $\omega (V_{wu}) \geq 2\lfloor \frac{n}{3} \rfloor+1 $ for $u \in \lbrace3k',3k'+2: \, 0\leq k' \leq k-1 \rbrace$ and $\omega (V_{w'}) \geq 2\lfloor \frac{n}{3} \rfloor+2 $ for others. In summary, 
$\omega (V_{wu}) \geq 2\lfloor \frac{n}{3} \rfloor+1 $ for $u\equiv 0,2 \pmod 3$ and $\omega (V_{wu}) \geq 2\lfloor \frac{n}{3} \rfloor+2$ for $u\equiv 1 \pmod 3$.

Now, let  $ u \in  \lbrace 1,n \rbrace$. Suppose that $u=1$ (for $u=n$, the proof is likewise). If $\langle V_{w1}\rangle \in D_2$, then $\omega (V_{w1}) \geq 2\lfloor \frac{n}{3} \rfloor+2$. Now, if $\langle V_{w1}\rangle \in D_1$, then $f(w11)=1$ or $f(w11)=0$.
In the first case, $f(w21)=2$, as $f(w13)=2$ implies that $\langle V_{w1}\rangle \in D_2$, which is a contradiction. In the second case, there exists $w' \in V^{t-2}$ such that $f(w'22)=2$ and  $w11 \in N(w'22) $.  As a consequence, $\omega(V_{w1})\geq 2\lfloor \frac{n}{3} \rfloor+2$ or for some $w' \in V^{t-2}$, $\omega(V_{w'2})\geq 2\lfloor \frac{n}{3} \rfloor+2$. In summary, we can collect the lower bounds for weight of the copies of $P_n$ in $S(P_n,2)$ in a table.

\begin{center}
\begin{tabular}{|l|l|l|l|l|}
\hline
  & $u=$ & $3k'$ & $3k'+1$ & $3k'+2$\\
 \hline
 &$u\ne 1,n$,  $ \omega (V_{wu}) \ge $ & $2\lfloor \frac{n}{3} \rfloor+1$ & $2\lfloor \frac{n}{3} \rfloor+2$ & $2\lfloor \frac{n}{3} \rfloor+1$\\
\hline
$\langle V_{w1}\rangle \in D_0 $  & $\begin{array}{ll}
& \omega (V_{w1}) \ge  \\
& \omega (V_{w2}) \ge \\
& \text{ and } \\
\exists w'\in V^{t-2}: & \omega (V_{w'2}) \ge
\end{array} $ 
 &  & $\begin{array}{l}
  2\lfloor \frac{n}{3} \rfloor   \\
  \\
  \\
\end{array} $   & $\begin{array}{l}
\\
 2\lfloor \frac{n}{3} \rfloor +2  \\
 \\
2\lfloor \frac{n}{3} \rfloor +2  \\
\end{array} $  \\
\hline
$\langle V_{w1}\rangle \in D_1 $  & $\begin{array}{ll}
& \omega (V_{w1}) \ge  \\
& \omega (V_{w2}) \ge \\
& \text{ or } \\
\exists w'\in V^{t-2}: & \omega (V_{w'2}) \ge
\end{array} $ 
 &  & $\begin{array}{l}
  2\lfloor \frac{n}{3} \rfloor +1  \\
  \\
  \\
\end{array} $   & $\begin{array}{l}
\\
 2\lfloor \frac{n}{3} \rfloor +2  \\
 \\
2\lfloor \frac{n}{3} \rfloor +2  \\
\end{array} $  \\
\hline
$\langle V_{w1}\rangle \in D_2 $  &\hspace{2,67cm}  $ \omega (V_{w1}) \ge  $
 & &  $2\lfloor \frac{n}{3} \rfloor +2 $ &  \\
\hline
\end{tabular}
\end{center}
 Therefore, 
 \begin{align*}
 \gamma_{_R}(S(P_n,t))& = \sum_{w\in V^{t-2}}  \sum_{u\in V}\omega(V_{wu})\\
 &\geq n^{t-2 }\left( \left(2 \left\lfloor \frac{n}{3} \right\rfloor +1 \right)\left(2\left\lfloor \frac{n}{3} \right\rfloor+1\right)+\left\lceil \frac{n}{3} \right\rceil \left(2\left\lfloor \frac{n}{3} \right\rfloor+2\right) \right) \\
 &= n^{t-2}\left(n \left\lceil \frac{2n}{3}\right\rceil  -2 \left\lceil \frac{n}{3}\right\rceil+1\right).
 \end{align*}
 and the proof of the lower bound is complete.


For $n\equiv 0,1\,  (\text{mod}\, 3)$, the upper bound $\gamma_{_R}(S(P_n,t))\le n^{t-2}(n \lceil \frac{2n}{3}\rceil  -\lceil \frac{n}{3}\rceil )$ is obtained from Theorem \ref{UpBoundRomanDomSierpinski}. Thus we consider the case $n=3k+2$ for some positive integer $k$. 
 
As above, consider the set $S=\lbrace x \in V \setminus \{n\} : \, x \equiv 2 \, (\text{mod} \, 3 \, ) \rbrace$. In order to construct a  Roman dominating function we introduce the following sets.
\begin{align*}
  & A_1=\lbrace  wis : \, w \in V^{t-2}, \, s \in S, \, i \geq s+2 \rbrace,\\
  & A_2=\lbrace wi(n-1): \, w \in V^{t-2}, \, i\in \lbrace1,n\rbrace \rbrace,\\
 &  A_3=\lbrace wij : \, w \in V^{t-2}, \, 1 \leq i \leq n-2, \, j=i+1+3k', \, 0 \leq k' \leq k-1 \rbrace,\\
 & C_1=\lbrace win : \, w \in V^{t-2}, \, i \in S \rbrace,\\
 & C_2=\lbrace w(s+1)(s-1): \, w \in V^{t-2}, \, s \in S \rbrace,\\
  &C_3=\lbrace w(n-1)(n-1): \, w \in V^{t-2}\rbrace.
\end{align*}
Define $g:V^t \rightarrow \lbrace 0,1,2 \rbrace$ such that
\begin{eqnarray*}g(wij)= \left \{ \begin{array}{ll} 
2, & wij \in \displaystyle \bigcup_{i=1}^{3}A_i;\\

1, & wij \in \displaystyle \bigcup_{i=1}^{3}C_i;\\

0,  & \text{otherwise}.

\end{array} \right.
\end{eqnarray*}\\

\begin{figure}[h]
\centering
\begin{tabular}{cccccc}

\begin{tikzpicture}
[vertex_style/.style={circle,inner sep=0pt,minimum size=0.17cm, fill opacity=0.9, fill=black},
edge_style/.style={line width=1pt, gray},line width=1pt]

\draw[gray](0.75,0.75)--(0.75,1.5);

\draw[gray](2.25,0)--(2.25,0.75);
\draw[gray](1.5,0)--(6.75,0);
\draw[gray](0.75,0.75)--(6,0.75);
\draw[gray](0,1.5)--(5.25,1.5);
\draw[gray](2.25,-0.75)--(7.5,-0.75);
\draw[gray](3.75,-0.75)--(3.75,0);
\draw[gray](5.25,-0.75)--(5.25,-1.5);
\draw[gray](3,-1.5)--(8.25,-1.5);
\draw[gray](3.75,-2.25)--(9,-2.25);
\draw[gray](6.75,-1.5)--(6.75,-2.25);
\draw[gray](4.5,-3)--(9.75,-3);
\draw[gray](8.25,-3)--(8.25,-2.25);
\draw[gray](5.25,-3.75)--(10.5,-3.75);
\draw[gray](9.75,-3.75)--(9.75,-3);

\filldraw[fill opacity=0.9,fill=black]  (0,1.5) circle (0.07cm);
\filldraw[fill opacity=0.9,fill=black]  (0.75,1.5) circle (0.07cm);
\filldraw[fill opacity=0.9,fill=black]  (1.5,1.5) circle (0.07cm);
\filldraw[fill opacity=0.9,fill=black]  (2.25,1.5) circle (0.07cm);
\filldraw[fill opacity=0.9,fill=black]  (3,1.5) circle (0.07cm);
\filldraw[fill opacity=0.9,fill=black]  (3.75,1.5) circle (0.07cm);
\filldraw[fill opacity=0.9,fill=black]  (4.5,1.5) circle (0.07cm);
\filldraw[fill opacity=0.9,fill=black]  (5.25,1.5) circle (0.07cm);

\filldraw[fill opacity=0.9,fill=black]  (0.75,0.75) circle (0.07cm);
\filldraw[fill opacity=0.9,fill=black]  (1.5,0.75) circle (0.07cm);
\filldraw[fill opacity=0.9,fill=black]  (2.25,0.75) circle (0.07cm);
\filldraw[fill opacity=0.9,fill=black]  (3,0.75) circle (0.07cm);
\filldraw[fill opacity=0.9,fill=black]  (3.75,0.75) circle (0.07cm);
\filldraw[fill opacity=0.9,fill=black]  (4.5,0.75) circle (0.07cm);
\filldraw[fill opacity=0.9,fill=black]  (5.25,0.75) circle (0.07cm);
\filldraw[fill opacity=0.9,fill=black]  (6,0.75) circle (0.07cm);

\filldraw[fill opacity=0.9,fill=black]  (1.5,0) circle (0.07cm);
\filldraw[fill opacity=0.9,fill=black]  (2.25,0) circle (0.07cm);
\filldraw[fill opacity=0.9,fill=black]  (3,0) circle (0.07cm);
\filldraw[fill opacity=0.9,fill=black]  (3.75,0) circle (0.07cm);
\filldraw[fill opacity=0.9,fill=black]  (4.5,0) circle (0.07cm);
\filldraw[fill opacity=0.9,fill=black]  (5.25,0) circle (0.07cm);
\filldraw[fill opacity=0.9,fill=black]  (6,0) circle (0.07cm);
\filldraw[fill opacity=0.9,fill=black]  (6.75,0) circle (0.07cm);

\filldraw[fill opacity=0.9,fill=black]  (2.25,-0.75) circle (0.07cm);
\filldraw[fill opacity=0.9,fill=black]  (3,-0.75) circle (0.07cm);
\filldraw[fill opacity=0.9,fill=black]  (3.75,-0.75) circle (0.07cm);
\filldraw[fill opacity=0.9,fill=black]  (4.5,-0.75) circle (0.07cm);
\filldraw[fill opacity=0.9,fill=black]  (5.25,-0.75) circle (0.07cm);
\filldraw[fill opacity=0.9,fill=black]  (6,-0.75) circle (0.07cm);
\filldraw[fill opacity=0.9,fill=black]  (7.5,-0.75) circle (0.07cm);
\filldraw[fill opacity=0.9,fill=black]  (6.75,-0.75) circle (0.07cm);

\filldraw[fill opacity=0.9,fill=black]  (3,-1.5) circle (0.07cm);
\filldraw[fill opacity=0.9,fill=black]  (3.75,-1.5) circle (0.07cm);
\filldraw[fill opacity=0.9,fill=black]  (4.5,-1.5) circle (0.07cm);
\filldraw[fill opacity=0.9,fill=black]  (5.25,-1.5) circle (0.07cm);
\filldraw[fill opacity=0.9,fill=black]  (6,-1.5) circle (0.07cm);
\filldraw[fill opacity=0.9,fill=black]  (8.25,-1.5) circle (0.07cm);
\filldraw[fill opacity=0.9,fill=black]  (7.5,-1.5) circle (0.07cm);
\filldraw[fill opacity=0.9,fill=black]  (6.75,-1.5) circle (0.07cm);

\filldraw[fill opacity=0.9,fill=black]  (3.75,-2.25) circle (0.07cm);
\filldraw[fill opacity=0.9,fill=black]  (4.5,-2.25) circle (0.07cm);
\filldraw[fill opacity=0.9,fill=black]  (5.25,-2.25) circle (0.07cm);
\filldraw[fill opacity=0.9,fill=black]  (6,-2.25) circle (0.07cm);
\filldraw[fill opacity=0.9,fill=black]  (8.25,-2.25) circle (0.07cm);
\filldraw[fill opacity=0.9,fill=black]  (7.5,-2.25) circle (0.07cm);
\filldraw[fill opacity=0.9,fill=black]  (6.75,-2.25) circle (0.07cm);
\filldraw[fill opacity=0.9,fill=black]  (9,-2.25) circle (0.07cm);

\filldraw[fill opacity=0.9,fill=black]  (5.25,-3) circle (0.07cm);
\filldraw[fill opacity=0.9,fill=black]  (6,-3) circle (0.07cm);
\filldraw[fill opacity=0.9,fill=black]  (8.25,-3) circle (0.07cm);
\filldraw[fill opacity=0.9,fill=black]  (7.5,-3) circle (0.07cm);
\filldraw[fill opacity=0.9,fill=black]  (6.75,-3) circle (0.07cm);
\filldraw[fill opacity=0.9,fill=black]  (9,-3) circle (0.07cm);
\filldraw[fill opacity=0.9,fill=black]  (9.75,-3) circle (0.07cm);
\filldraw[fill opacity=0.9,fill=black]  (4.5,-3) circle (0.07cm);

\filldraw[fill opacity=0.9,fill=black]  (6,-3.75) circle (0.07cm);
\filldraw[fill opacity=0.9,fill=black]  (8.25,-3.75) circle (0.07cm);
\filldraw[fill opacity=0.9,fill=black]  (7.5,-3.75) circle (0.07cm);
\filldraw[fill opacity=0.9,fill=black]  (6.75,-3.75) circle (0.07cm);
\filldraw[fill opacity=0.9,fill=black]  (9,-3.75) circle (0.07cm);
\filldraw[fill opacity=0.9,fill=black]  (9.75,-3.75) circle (0.07cm);
\filldraw[fill opacity=0.9,fill=black]  (10.5,-3.75) circle (0.07cm);
\filldraw[fill opacity=0.9,fill=black]  (5.25,-3.75) circle (0.07cm);

\coordinate [label=right:{$2$}] (a) at (0.5,1.75);
\coordinate [label=right:{$2$}] (a) at (4.25,1.75);
\coordinate [label=right:{$2$}] (a) at (2.75,1.75);

\coordinate [label=right:{$1$}] (a) at (5.75,1);
\coordinate [label=right:{$2$}] (a) at (2,1);
\coordinate [label=right:{$2$}] (a) at (4.25,1);

\coordinate [label=right:{$2$}] (a) at (5.75,0.25);
\coordinate [label=right:{$2$}] (a) at (3.5,0.25);
\coordinate [label=right:{$1$}] (a) at (1.25,0.25);

\coordinate [label=right:{$2$}] (a) at (2.75,-0.5);
\coordinate [label=right:{$2$}] (a) at (5,-0.5);
\coordinate [label=right:{$2$}] (a) at (7.25,-0.5);

\coordinate [label=right:{$2$}] (a) at (3.5,-1.25);
\coordinate [label=right:{$2$}] (a) at (6.53,-1.25);
\coordinate [label=right:{$1$}] (a) at (7.92,-1.25);

\coordinate [label=right:{$2$}] (a) at (4.25,-2);
\coordinate [label=right:{$1$}] (a) at (5.75,-2);
\coordinate [label=right:{$2$}] (a) at (8,-2);

\coordinate [label=right:{$2$}] (a) at (5,-2.75);
\coordinate [label=right:{$2$}] (a) at (7.25,-2.75);
\coordinate [label=right:{$1$}] (a) at (8.8,-2.75);

\coordinate [label=right:{$2$}] (a) at (5.8,-3.5);
\coordinate [label=right:{$2$}] (a) at (8,-3.5);
\coordinate [label=right:{$2$}] (a) at (9.65,-3.5);

\draw[gray] (0,1.5) ..  controls  (-0.75,1.5) .. (-1.5,-2);

\filldraw[fill opacity=0.9,fill=black]  (-1.5,-2) circle (0.07cm);
\coordinate [label=right:{$w3122$}] (a) at (-1.6,-1.85);


\draw[gray] (1.5,0.75) ..  controls  (0.6,0.73) .. (-1,-4);
\filldraw[fill opacity=0.9,fill=black]  (-1,-4) circle (0.07cm);
\coordinate [label=right:{$w2333$}] (a) at (-1,-4);

\draw[gray] (3,0) ..  controls  (1.75,-0.25) .. (0.75,-5);
\filldraw[fill opacity=0.9,fill=black]  (0.75,-5) circle (0.07cm);
\coordinate [label=right:{$w3322$}] (a) at (0.8,-5);

\end{tikzpicture} 
\end{tabular}\caption{This figure shows 
the labelling of $\langle V_{w32}\rangle \cong S(P_8,2)$
induced by $g$, where labels $0$'s are omitted. }
\end{figure}
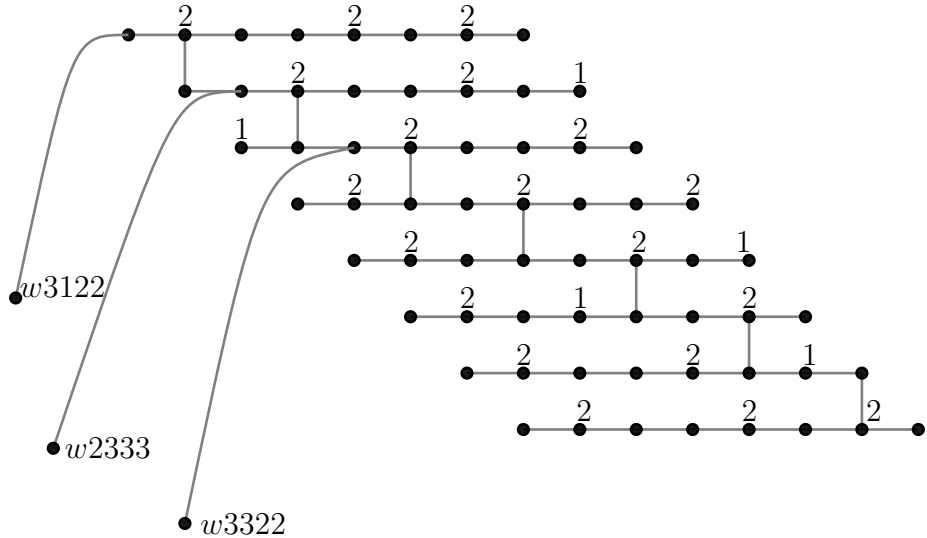

Suppose that $g(wij)=0$ for $w \in V^{t-2}$ and $i,j \in V$. If $i >j+2$, then $j \not\in S$ and so $wij \in N(wis)$ where $s \in \{ j-1,j+1 \}$. As consequence, $i > s+2$ and $wis \in A_1$. If $i=j+2$ and $s=j+1$, then $wij=w(s+1)(s-1) \in C_2$, which is a contradiction, as $g(wij)=0$. Hence, if $i=j+2$, then $s=j-1$ and $wis=w(s+3)s \in A_1.$  Now, let $i <j+2$. If $i=j+1$, then $wij=wi(i-1) \in N(w(i-1)i ) $ and $w(i-1)i \in A_2 $. Also, if $i < j+1$, then $wij$ is dominated by some vertex in $A_2 \cup A_3$.
 Hence, $g$ is a Roman dominating function on $S(P_n,t)$. Thus, $$\gamma_{_R}(S(P_n,t)) \leq \omega(g)=2\displaystyle \sum_{i=1}^{3}|A_i |+\sum_{i=1}^{3}|C_i |.$$
 On one hand,  $$\displaystyle\sum_{i=1}^{3}|C_i |=n^{t-2}(2|S|+1)=n^{t-2}(2\gamma(P_n)-1)=2k+1$$ and, on the other hand, $$  \vert A_1 \vert=n^{t-2}\displaystyle \sum_{u \in S}u= n^{t-2}\left(\frac{3k^2+k}{2}\right),\, |A_2|=2n^{t-2}$$  and $$|A_3|=n^{t-2}\left( k+2+\displaystyle \sum_{i=2}^{k}3i\right)=n^{t-2}\left(\frac{3k^2+5k-2}{2}\right).$$ 
Thus,  $$\displaystyle \sum_{i=1}^{3}|A_i |=n^{t-2}\left(\frac{3k^2+k}{2}+2+\frac{3k^2+5k-2}{2}\right)=n^{t-2}(3k^2+3k+1).$$ Therefore, $\gamma_{_R}(S(P_n,t))\leq n^{t-2}(6k^2+8k+3)$ and, since $n=3k+2$, $$\gamma_{_R}(S(P_n,t)) \leq n^{t-2}\left( n \left\lceil \frac{2n}{3}\right\rceil  -2 \left\lceil \frac{n}{3}\right\rceil+1\right),$$ as required.\\

\end{proof}

\section{The particular case of cycles}
\begin{theorem} Let  $n \geq 4$ and $t \geq 2$ be two integers. 
If $n\equiv 1,2 \pmod 3$, then $\gamma_{_R}(S(C_n,t))=n^{t-1} \lfloor \frac{2n}{3} \rfloor $, otherwise, $ \frac{n^{t-1}(2n-3)}{3} \le \gamma_{_R}(S(C_n,t))\le  \frac{n^{t-1}(2n-1)}{3}. $
\end{theorem}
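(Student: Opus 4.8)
The plan is to compare, for an arbitrary $\gamma_{_R}$-function $f$ on $S(C_n,t)$, the weight $\omega(V_w)=\sum_{x\in V}f(wx)$ of each innermost copy $\langle V_w\rangle\cong C_n$ ($w\in V^{t-1}$) against the per-copy target $\lfloor 2n/3\rfloor$ (resp.\ $\lfloor 2n/3\rfloor-1$ when $n\equiv0\pmod 3$). Let $e_w$ be the extreme vertex of $\langle V_w\rangle$, lying at cycle position $p$ with neighbours $p\pm1$. By the structural Remark following the definition of $S(G,t)$, only $e_w$ and $p\pm1$ can have neighbours outside the copy, so the ``core'' path on positions $p+2,\dots,p-2$ (a $P_{n-3}$) must be Roman-dominated from inside $\langle V_w\rangle$. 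Using Theorem~\ref{RomanDominationPn} together with the elementary fact that attaching pendant vertices to a graph cannot decrease the weight needed to Roman-dominate its original vertices (applied with the two pendants $p\pm1$), I would derive the single inequality that drives everything:
\[
\omega(V_w)\ \ge\ f(e_w)+\gamma_{_R}(P_{n-3})\ =\ f(e_w)+\Big\lceil\tfrac{2n}{3}\Big\rceil-2 .
\]

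For the upper bound, note that $S(C_n,t)$ is obtained by adding edges to the disjoint union of the $n^{t-2}$ induced copies $\langle V_w\rangle\cong S(C_n,2)$ ($w\in V^{t-2}$); since adding edges never destroys a Roman dominating function, superposing optimal functions gives $\gamma_{_R}(S(C_n,t))\le n^{t-2}\,\gamma_{_R}(S(C_n,2))$. When $n\equiv0\pmod 3$ the bound $\tfrac{n^{t-1}(2n-1)}{3}$ follows at once from Theorem~\ref{UpBoundRomanDomSierpinski} with the function placing a $2$ on every third vertex of $C_n$. When $n\equiv1,2\pmod 3$ I would instead build an explicit function on $S(C_n,2)$ of weight $n\lfloor 2n/3\rfloor$: in each copy $\langle V_a\rangle$ place a $2$ at the connecting vertex $a(a-1)$, exempt the connecting vertex $a(a+1)$ (which is adjacent to $(a+1)a$, carrying a $2$ by the same rule), and Roman-dominate the remaining core $P_{n-4}$ optimally. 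Each copy then weighs exactly $2+\gamma_{_R}(P_{n-4})=\lfloor 2n/3\rfloor$, the exemptions close up consistently around the cyclically arranged copies, and multiplying by $n^{t-2}$ yields $\gamma_{_R}(S(C_n,t))\le n^{t-1}\lfloor 2n/3\rfloor$.

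For the lower bound I would split the copies according to $f(e_w)$. If $f(e_w)\ge1$ the displayed inequality already gives $\omega(V_w)\ge\lfloor 2n/3\rfloor$ (with a surplus of one when $f(e_w)=2$). If $f(e_w)=0$ and $e_w$ is dominated from within the copy, then some $p\pm1$ carries a $2$; refining the core estimate (that $2$ covers only one endpoint of the core, leaving a $P_{n-4}$) upgrades the bound to $\omega(V_w)\ge 2+\gamma_{_R}(P_{n-4})=\lfloor 2n/3\rfloor$. The only copies that can fall below target are those with $f(e_w)=0$ whose extreme vertex is dominated from outside; there $\omega(V_w)\ge\lfloor 2n/3\rfloor-1$. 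The crux is that such external domination forces $e_w$ to be a connector whose unique outside neighbour $v^{\ast}$ satisfies $f(v^{\ast})=2$, and a short computation with the word labels shows that $v^{\ast}$ is precisely the extreme vertex of another copy $\langle V_{w^{\ast}}\rangle$, with $w\mapsto w^{\ast}$ injective. Applying the displayed inequality to $w^{\ast}$ with $f(e_{w^{\ast}})=f(v^{\ast})=2$ gives $\omega(V_{w^{\ast}})\ge\lfloor 2n/3\rfloor+1$, so each deficient copy is matched to a distinct copy carrying a compensating surplus. Summing, the matched pairs contribute at least $2\lfloor 2n/3\rfloor$ each and every other copy at least $\lfloor 2n/3\rfloor$, whence $\gamma_{_R}(S(C_n,t))\ge n^{t-1}\lfloor 2n/3\rfloor$; the case $n\equiv0\pmod 3$ runs identically with the per-copy target $\tfrac{2n}{3}-1$, giving the lower bound $\tfrac{n^{t-1}(2n-3)}{3}$.

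The step I expect to be hardest is this charging argument: translating ``$e_w$ is dominated from outside'' into the precise statement that the compensating $2$ sits at the extreme vertex of a \emph{distinct} copy $w^{\ast}$, and verifying that $w\mapsto w^{\ast}$ is injective, both of which require careful bookkeeping with the word description of the edges of $S(C_n,t)$. A secondary point needing care is the refined estimate $2+\gamma_{_R}(P_{n-4})=\lfloor 2n/3\rfloor$ in the internally-dominated subcase, since it is exactly what guarantees that only the externally-dominated copies must be charged; like every numerical step above, it reduces to the path values of Theorem~\ref{RomanDominationPn}.
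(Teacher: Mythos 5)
Your proposal is correct, and its engine is the same as the paper's. Your displayed inequality $\omega(V_w)\ge f(e_w)+\gamma_{_R}(P_{n-3})$ is the paper's decomposition $\omega(V_{wu})=\omega(C_{wu})+\sum_{i\in\{u-1,u,u+1\}}f(wui)$ with $\langle C_{wu}\rangle\cong P_{n-3}$, and your charging of each deficient copy (extreme vertex labelled $0$ and dominated from outside) to a distinct copy whose extreme vertex carries a $2$ is exactly the paper's injective map $\phi:D_0\rightarrow D_2$ between copies of weight $\lceil \frac{2n}{3}\rceil-2$ and copies of weight at least $\lceil \frac{2n}{3}\rceil$; your word-level verification that the unique outside neighbour of an extreme vertex is again the extreme vertex of a distinct copy, and that the pairing is injective, is precisely the bookkeeping the paper leaves implicit, and it does go through (write $e_w=w'zy^{s+1}$ with $z\neq y$; the only possible outside neighbour is $w'yz^{s+1}$, which is extreme in its own block). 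The one genuine divergence is the upper bound for $n\equiv 1,2\pmod 3$: the paper treats $n\equiv 1$ via a $2$-packing (efficient) dominating set together with $\gamma_{_R}\le 2\gamma$, and $n\equiv 2$ via an explicit labelling of all of $S(C_n,t)$ (the sets $A$ and $B$), whereas you construct a single labelling of $S(C_n,2)$ --- a $2$ at each connecting vertex $a(a-1)$, the opposite connector $a(a+1)$ exempted and dominated across the block, and an optimal Roman dominating function on the residual $P_{n-4}$ --- and lift it by superposition over the $n^{t-2}$ prefix blocks; this is legitimate because those blocks are induced copies of $S(C_n,2)$ and adding the inter-block edges cannot destroy a Roman dominating function (the spirit of Remark~\ref{RomanSubgraph}). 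Your route handles both residues uniformly with the transparent per-copy count $2+\gamma_{_R}(P_{n-4})=\lfloor\frac{2n}{3}\rfloor$, while the paper's perfect-code argument for $n\equiv 1$ additionally yields $\gamma(S(C_n,t))=n^{t-1}\lfloor\frac{n}{3}\rfloor$; your auxiliary pendant lemma is sound (fold a pendant's value into its support vertex), and all the numerical identities you rely on, in particular $\lceil\frac{2n}{3}\rceil-1=\lfloor\frac{2n}{3}\rfloor$ for $n\equiv 1,2\pmod 3$ and the use of Theorem~\ref{UpBoundRomanDomSierpinski} when $n\equiv 0\pmod 3$, check out.
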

\begin{proof} Let $V=\{1,\dots , n\}$ be the vertex set of $C_n$, where $i\in N(i+1)$, for any $i$, and the addition is taken modulo $n$. First, we proceed to deduce the upper bound for $\gamma_{_R}(S(C_n,t))$. 
If $n\equiv 0 \pmod 3$, then Theorem \ref{UpBoundRomanDomSierpinski} leads to  
\begin{equation}
\gamma_{_R}(S(C_n,t)) \leq\frac{n^{t-1}}{3} (2n-1).
\end{equation}
 Suppose that  $n = 3k + 1$, for some integer $k$.  Define $D = \{ ij : i\in V, \ j = i + 1 + 3k', \, 0 \le k' < k - 1 \}$ and $D_{t - 2} = \{ wx : w \in V^{t - 2}, x \in D \}$. Notice that $D$ is a  $2-$packing dominating set, and $D \cap \{ii : i \in V\} = \emptyset $, hence $D_{t - 2}$ is also a $2-$packing dominating set and therefore 
$\gamma(S(C_n, t)) = |D_{t - 2}| = n^{t - 2}  |D| = n^{t-1} \left \lfloor \frac{n}{3} \right \rfloor,$ which implies that
\begin{equation}
 \gamma_{_R}(S(C_n,t))\leq 2 \gamma(S(C_n,t))=n^{t-1} \left\lfloor \frac{2n}{3} \right\rfloor.
\end{equation}
 Now, let $n=3k+2$ for any positive integer $k$. Set $$A=\lbrace wij: \, w \in V^{t-2}, \, i\in V, \, j = i+1+3k' ,\, 0\leq k' \leq k-1 \rbrace $$ and $$B=\lbrace wij: \;  w \in V^{t-2}, \; i\in V, \; j= i-2 \rbrace.$$
 Define $f_2:V^t\rightarrow \lbrace 0,1,2 \rbrace$ such that \\
\begin{eqnarray*}f_2(x)= \left \{ \begin{array}{ll} 

2,& x \in A;
\\
1, &  x \in B;
\\
0, & \text{otherwise.}
\end{array} \right
.\end{eqnarray*}\\
Let $w \in V^{t-2}$ and $i,j\in V$  such that $g(wii)=0$. If $j \equiv i-1\pmod n$, then $wi(i-1) \in N(w(i-1)i) \subset N(A)$. Otherwise, $j \equiv i+3k'$ or $i+2+3k' \pmod n$, for $1 \leq k' \leq k-1 $. Hence, $wij\in N(wi(j+1))$ or $wij\in N(wi(j-1))$ respectively. So $wij \in N(A)$. Therefore, $g$ is a Roman dominating function on $S(C_n,t)$ and, as a consequence,
\begin{equation}
\gamma_{_R}(S(C_n,t))\leq \omega(f_2)=2|A|+|B|=n^{t-1}(2k+1)= n^{t-1}\left \lfloor \frac{2n}{3} \right\rfloor.
\end{equation}
Now we will find the lower bound for $\gamma_{_R}(S(C_n,t))$. Assume that $f=(B_0,B_1,B_2)$ is a $\gamma_{_R}$-function on $S(C_n,t)$. 
Set $$C_{wu}= \lbrace wui \in V_{wu}: \, i\not\in \{u-1,u,u+1 \} \rbrace $$ for $w \in V^{t-2}$ and $u \in V$. 
Hence, induced subgraph on $ C_{wu} $ is isomorphic to $P_{n-3}$ and  $\omega(V_{wu})=\omega(C_{wu})+\displaystyle \sum_{i \in \{ u-1,u,u+1\}} f(wui)$.
 Let $$D_i=\left\{\langle V_{wu}\rangle : \omega(V_{wu})=\left\lceil\frac{2n}{3}\right\rceil-2+i \right\}\text{ for } i\in \{0,1\}$$ and 
 $$D_2=\left\{\langle V_{wu}\rangle : \omega(V_{wu})=\left\lceil\frac{2n}{3}\right\rceil-2+j, \text{ for some } j\ge 2 \right\}.$$
 Notice that  $$
  \gamma_{_R}(S(C_n,t)) 
 =   \sum_{\langle V_{wu}\rangle\in D_0 }\omega(V_{wu})+\sum_{\langle V_{wu}\rangle\in D_1 }\omega(V_{wu})+\sum_{\langle V_{wu}\rangle\in D_2 }\omega(V_{wu}). 
$$ If $\langle V_{wu}\rangle\in D_0 $, then $\{wu(u-1),wuu,wu(u+1) \} \subset B_0$ and so there exists $w' \in V^{t-2}$ and $v \in V$ such that $wuu \in N(w'vv)$ and $f(w'vv)=2$. Thus, $\langle V_{w'v}\rangle\in D_2$. We can define an injective application $\phi: D_0 \longrightarrow D_2$, so that we emphasize that if $\langle V_{wu}\rangle \in D_0$, then  the contribution of $\omega(V_{wu})+\omega(\phi(\langle V_{wu}\rangle))$ to  $\gamma_{_R}(S(C_n,t)) $ is greater than or equal to such contribution when 
both $\langle V_{wu} \rangle$ and $\phi(\langle V_{wu}\rangle )$ are in $D_1$. The argument shows that,  $$\gamma_{_R}(S(C_n,t)) = \displaystyle \sum_{w\in V^{t-2}}  \sum_{u\in V}\omega(V_{wu})\geq n^{t-1} \left(\left\lceil\frac{2n}{3}\right\rceil-1\right).$$
Therefore, the result follows.
 \end{proof}

\section{The particular case of complete graphs}

The domination number of $S(K_n,t)$ was previously studied by Klav\v{z}ar, Milutinovi\'c and Petr in \cite{Klavzar2002} where they obtained the following result.

\begin{theorem}{\rm \cite{Klavzar2002}}\label{DominatingNumberSierpinskiCompletegraphs}
For any integers   $n \geq 2$ and $t \geq 1$, 
\begin{eqnarray*}\gamma (S(K_n,t))=
\left \{ \begin{array}{ll} 

\displaystyle \frac{n^t+n}{n+1},&  t \text{ even};
\\
\\
\displaystyle \frac{n^{t}+1}{n+1}, &  t \text{ odd}.
\end{array} 
\right.
\end{eqnarray*}
\end{theorem}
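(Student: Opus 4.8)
The plan is to reduce the computation of $\gamma(S(K_n,t))$ to the analysis of a $1$-perfect code (efficient dominating set). Recall from the introduction that Klav\v{z}ar, Milutinovi\'c and Petr proved in \cite{Klavzar2002} that $S(K_n,t)$ always admits a $1$-perfect code $C$, i.e.\ a set whose closed neighbourhoods $\{N[c]:c\in C\}$ partition $V(S(K_n,t))$. I would take this existence result as the starting point and split the argument into two independent parts: first, a general principle showing that such a $C$ is automatically a \emph{minimum} dominating set, so that $\gamma(S(K_n,t))=|C|$; and second, a purely arithmetic computation of $|C|$ from the degree sequence of $S(K_n,t)$.

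For the first part I would argue as follows. Let $D$ be any dominating set of $S(K_n,t)$. Each $c\in C$ must be dominated by $D$, that is, $D\cap N[c]\neq\emptyset$. Since $C$ is a perfect code the sets $N[c]$, $c\in C$, are pairwise disjoint, so one can choose inside $D$ one element of each $N[c]$, and these elements are pairwise distinct; hence $|D|\ge|C|$. As $C$ is itself dominating, this yields $\gamma(S(K_n,t))=|C|$. This step is short and robust and does not depend on the parity of $t$.

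The second part uses the degree structure of $S(K_n,t)$, which follows from the remarks in the introduction: the $n$ extreme vertices $x\cdots x$ have degree $n-1$, while every remaining vertex has exactly one connecting edge in addition to its $n-1$ neighbours inside its innermost copy, hence degree $n$ (a check against the edge count $|E(S(K_n,t))|=\tfrac{n^{t+1}-n}{2}$ confirms there are no other degrees). Writing $a=|C\cap\{x\cdots x: x\in V\}|$ for the number of extreme vertices in the code and using that the closed neighbourhoods of $C$ partition the $n^t$ vertices, I obtain
\begin{equation*}
 a\,n+(|C|-a)(n+1)=n^t .
\end{equation*}
Reducing this identity modulo $n+1$ and using $n\equiv-1\pmod{n+1}$ gives $-a\equiv(-1)^t\pmod{n+1}$; since $0\le a\le n$, this forces $a=n$ when $t$ is even and $a=1$ when $t$ is odd. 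Substituting back solves for $|C|$, giving $\tfrac{n^t+n}{n+1}$ for even $t$ and $\tfrac{n^t+1}{n+1}$ for odd $t$, which is exactly the claimed value.

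The only non-elementary ingredient is the existence of the perfect code $C$, imported from \cite{Klavzar2002}; this is where the real difficulty lies, and I expect it to be the main obstacle if one insists on a self-contained argument. A direct construction would proceed by induction on $t$, assembling $C$ from the $n$ copies of $S(K_n,t-1)$: for even $t$ one places in the $i$-th copy the (odd-level) perfect code containing that copy's $i$-corner, and the inert connecting edges make the union a perfect code containing all extreme vertices; for odd $t$ one copy carries a full (even-level) code while the remaining $n-1$ copies must carry \emph{defective} codes that leave their connecting vertex to be dominated across the linking edge. Verifying that these defective sub-codes exist and fit together without creating double domination is the delicate point, and formalising it cleanly is where most of the effort would go. As a sanity check, the lower bound can also be obtained independently and parity-free from $\Delta(S(K_n,t))=n$, since $\gamma\ge\lceil n^t/(n+1)\rceil$ already equals the claimed formula in both cases.
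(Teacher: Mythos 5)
This theorem is imported by the paper verbatim from \cite{Klavzar2002} with no proof given, so there is no internal proof to compare against; judged on its own, your argument is correct and is an efficient way to recover the formula from the single ingredient you import, namely the existence of a $1$-perfect code $C$ in $S(K_n,t)$, which the paper's introduction likewise attributes to \cite{Klavzar2002}. Both of your steps check out: (i) any dominating set $D$ must meet each of the pairwise disjoint sets $N[c]$, $c\in C$, so $|D|\ge |C|$ and hence $\gamma(S(K_n,t))=|C|$; (ii) the degree analysis is right --- every non-extreme vertex has a unique maximal trailing run $v_{t-s+1}=\dots=v_t$ and therefore exactly one connecting edge, at level $s+1$, giving degree $n$, while the $n$ extreme vertices have degree $n-1$ --- so the partition identity $an+(|C|-a)(n+1)=n^t$ holds, and reduction modulo $n+1$ (using $n\equiv -1$, $n^t\equiv(-1)^t$) forces $a=n$ for $t$ even and $a=1$ for $t$ odd, from which substitution yields exactly $\frac{n^t+n}{n+1}$ and $\frac{n^t+1}{n+1}$. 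A pleasant byproduct is that your arithmetic re-derives the structural facts quoted in the introduction (the even-level code contains all extreme vertices; each odd-level code contains exactly one). Two caveats. First, your ``check against the edge count'' is circular as stated, since $|E(S(K_n,t))|=\frac{n^{t+1}-n}{2}$ is itself computed from the degree sequence (or from the recursion $|E_t|=n\,|E_{t-1}|+\binom{n}{2}$); the trailing-run argument should be presented as the actual proof of the degree claim. Second, as you honestly flag, the existence of the perfect code is the genuinely hard content of \cite{Klavzar2002}, and your inductive sketch with ``defective'' sub-codes for odd $t$ has the right shape but is not a proof; so what you have is a correct and clean \emph{reduction} of the domination formula to the perfect-code theorem, not a self-contained alternative. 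Your closing observation is also sound and worth keeping: since $\Delta(S(K_n,t))=n$ for $t\ge 2$, the parity-free bound $\gamma\ge\lceil n^t/(n+1)\rceil$ already equals the claimed value in both cases, so the code is only needed as a dominating set of matching size, giving an even shorter route to minimality than the disjoint-neighborhood argument.
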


The above result is an important tool to deduce an upper bound on the Roman domination number of $S(K_n,t)$.

\begin{theorem}
For any integers   $n \geq 2$ and $t \geq 1$, 
\begin{eqnarray*}\gamma_{_R}(S(K_n,t))\le 
\left \{ \begin{array}{ll} 

\displaystyle \frac{2n^t+n-1}{n+1},&  t \text{ even};
\\
\\
\displaystyle \frac{2(n^{t}+1)}{n+1}, &  t \text{ odd}.
\end{array} 
\right.
\end{eqnarray*}
\end{theorem}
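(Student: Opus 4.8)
The plan is to treat the two parities separately, since for odd $t$ the claimed value is exactly $2\gamma(S(K_n,t))$, whereas for even $t$ it is one unit smaller. For $t$ odd I would simply combine Lemma~\ref{lema-roman-dom} with Theorem~\ref{DominatingNumberSierpinskiCompletegraphs}: because $\gamma_{_R}(G)\le 2\gamma(G)$ for every graph,
\[
\gamma_{_R}(S(K_n,t))\le 2\gamma(S(K_n,t))=\frac{2(n^{t}+1)}{n+1},
\]
which is precisely the stated bound, so nothing else is needed in this case.

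For $t$ even the target $\frac{2n^{t}+n-1}{n+1}$ equals $2\gamma(S(K_n,t))-1$ (one less than $\frac{2(n^t+n)}{n+1}$), so the entire content is to shave off a single unit of weight. My plan is to exhibit a Roman dominating function $f=(B_0,\{v\},B_2)$ with $|B_2|=\gamma(S(K_n,t))-1$: assign $2$ to a set $W$ of $\gamma(S(K_n,t))-1$ vertices that dominates every vertex except one extreme vertex $v=nn\cdots n$, and assign $1$ to $v$ itself. Such an $f$ has weight $2(\gamma-1)+1=2\gamma-1$ and is Roman as soon as (i) every vertex outside $W\cup\{v\}$ has a neighbour in $W$ and (ii) $v$ has no neighbour in $W$, so that $v$ legitimately carries the label $1$. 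The base case $t=2$ is explicit: take $W=\{xn:\ x\neq n\}$ and $v=nn$, where each copy $\langle V_x\rangle\cong K_n$ is dominated by its vertex $xn$, the vertex $nj$ ($j\neq n$) is dominated through the connecting edge by $jn\in W$, and $nn$ has no neighbour in $W$.

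To build $W$ for a general even $t$ I would use the recursive description of $S(K_n,t)$ as $n$ copies of $S(K_n,t-1)$ together with the structure of the minimum (efficient) dominating sets underlying Theorem~\ref{DominatingNumberSierpinskiCompletegraphs}. Put the label $1$ on the extreme vertex $v=nn\cdots n$ of the copy $\langle V_n\rangle$. In each copy $\langle V_x\rangle$ with $x\neq n$ place $2$ on a $\gamma$-set of $\langle V_x\rangle\cong S(K_n,t-1)$ chosen to be a perfect code containing the connecting vertex $xnn\cdots n$ that points at $\langle V_n\rangle$; this costs $\gamma(S(K_n,t-1))$ per copy and, through the connecting edges, dominates the connecting vertices $nxx\cdots x$ of $\langle V_n\rangle$ from the outside. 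Inside $\langle V_n\rangle$ it then remains only to dominate the non extreme vertices, since the $n$ extreme vertices of $\langle V_n\rangle$ are already handled — the $n-1$ of the form $nxx\cdots x$ externally and $v$ by its own label $1$. The aim is to do this with $\gamma(S(K_n,t-1))-1$ vertices, none adjacent to $v$. Summing gives $|W|=(n-1)\gamma(S(K_n,t-1))+(\gamma(S(K_n,t-1))-1)=n\gamma(S(K_n,t-1))-1=\gamma(S(K_n,t))-1$, using the identity $n\gamma(S(K_n,t-1))=\gamma(S(K_n,t))$ valid for even $t$; this is exactly the required count, and for $t=2$ it degenerates to the explicit function above.

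The delicate point, which I expect to be the main obstacle, is this last step inside $\langle V_n\rangle$: I must guarantee both that a perfect code of $S(K_n,t-1)$ through a prescribed extreme vertex exists and that the non extreme vertices of a copy of $S(K_n,t-1)$ can be dominated using one vertex fewer than $\gamma(S(K_n,t-1))$, with the chosen vertices never adjacent to $v$. Intuitively, exempting the $n$ extreme vertices (each of degree $n-1$) from the domination requirement should free up exactly one dominator, but making this precise requires the fine description of the efficient and perfect dominating sets of $S(K_n,t-1)$ from \cite{Klavzar2002}. Once that structural fact is established, conditions (i) and (ii) are routine, and the weight count yields $\gamma_{_R}(S(K_n,t))\le 2\gamma(S(K_n,t))-1=\frac{2n^{t}+n-1}{n+1}$, finishing the even case.
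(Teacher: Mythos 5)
Your treatment of odd $t$ is exactly the paper's: Lemma \ref{lema-roman-dom} plus Theorem \ref{DominatingNumberSierpinskiCompletegraphs}, and your weight accounting for even $t$ (target $2\gamma(S(K_n,t))-1$, base case $t=2$) is also correct and matches the paper's base case. But for even $t\ge 4$ your argument has a genuine gap at precisely the step you flag: the existence, in $S(K_n,t-1)$ with $t-1$ odd, of a set of $\gamma(S(K_n,t-1))-1$ vertices dominating all non-extreme vertices and avoiding the neighbourhood of a prescribed extreme vertex. This is not a routine consequence of \cite{Klavzar2002}, and your intuition that ``exempting $n$ extreme vertices frees up one dominator'' understates how rigid the requirement is. Count closed neighbourhoods: writing $s=t-1$ and $\gamma=\gamma(S(K_n,s))=\frac{n^s+1}{n+1}$, a set of $\gamma-1$ vertices covers at most $(\gamma-1)(n+1)=n^s-n$ vertices, which is exactly the number of non-extreme vertices. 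So any such set must consist entirely of degree-$n$ (non-extreme) vertices, must cover every non-extreme vertex \emph{exactly once}, and must cover \emph{no} extreme vertex at all — an exact-cover (efficient-domination-of-a-subset) condition. In particular the natural candidate fails: if $C$ is the perfect code of $S(K_n,s)$ through the extreme vertex $v$ (each odd-level code contains exactly one extreme vertex), then $C\setminus\{v\}$ leaves the $n-1$ non-extreme neighbours of $v$ undominated. The fact you need does appear to be true — for $s=3$ one can check that $\{aba:\ a\ne b\}$ covers the $n^3-n$ non-extreme vertices exactly once and misses all extreme vertices, and for $n=2$, $s=3$ this is $\{3,6\}$ in $P_8$ — but establishing it for all odd $s$ requires its own inductive construction, which is essentially the same amount of work as the theorem itself. (A minor point: your condition (ii), that $v$ have no neighbour in $W$, is not needed for $f$ to be a Roman dominating function — vertices with label $1$ require nothing — although the tight counting above forces it anyway.)

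For contrast, the paper avoids any such unproved structural lemma by inducting on even $t$ in steps of two: it carries along a Roman dominating function $f$ on $S(K_n,2k)$ of weight $\frac{2n^{2k}+n-1}{n+1}$ with value $1$ on the extreme vertex $1\cdots 1$, and builds $f'$ on $S(K_n,2k+2)$ by reusing (relabelled) copies of $f$ on the $n^2-n+1$ blocks with prefixes $11$, $1i$, $ij$, while placing twos on the $n-1$ blocks with prefix $i1$ along the \emph{even}-level perfect code that contains all $n$ extreme vertices, which is a stated result (\cite[Corollary 3.5]{Klavzar2002}); the zeros created at the block-corners $1i\cdots i$ and $ij1\cdots 1$ are then dominated across connecting edges by those code vertices. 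So your decomposition (one level down, into $n$ odd-level copies, with perfect codes in $n-1$ of them and a deficient code in the last) is genuinely different from the paper's two-level induction, and would be a clean alternative if you proved the exact-cover lemma above; as written, the proposal is incomplete at that point.
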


\begin{proof}
Let $V=\{1,2,\dots, n\}$ be the vertex set of $K_n$. For $t$ odd we deduce the bound from Theorem \ref{DominatingNumberSierpinskiCompletegraphs}, as $\gamma_{_R}(S(K_n,t))\le 2 \gamma(S(K_n,t))$. We claim that for $t=2k$ there exists a Roman dominating function such that $f(1\dots 1)=1$ and $\omega (f)=\frac{2n^{2k}+n-1}{n+1}$.
To show this we proceed by induction on $k$. For $k=1$ we define the Roman dominating function $f$ as follows. $f(11)=1$, $f(i1)=2$ for all $i\ne 1$ and $f(xy)=0$ for others. Notice that $\omega (f)=2(n-1)+1=\frac{2n^{2}+n-1}{n+1}$.

Now, suppose that $f$ is a Roman dominating function on $S(K_n,2k)$ such that $f(1\dots 1)=1$ and $\omega (f)=\frac{2n^{2k}+n-1}{n+1}$. We shall construct a Roman dominating function $f'$ on $S(K_n,2k+2)$ in the following way: 
\begin{itemize}
\item   $f'(11w)=f(w)$ for all $w\in V^{2k}$. 
\item  $f'(1i\dots i)=0$ for all $i\ne 1$ and $f(11w)=f(w')$ for all $w\in V^{2k-2}\setminus\{i\dots i: i\in V\}$, where $w'$ is obtained from $w$ by exchanging $i$ and $1$.
\item For any $i\in V\setminus\{1\}$ and $w\in V^{2k}$, we define $f(i1w)$ as follows. As shown in \cite[Corollary 3.5]{Klavzar2002}, there exists a $1$-perfect code $C$ of $S(K_n,2k)$ which contains all the extreme vertices. So, we set $f'(i1w)=2$ for all $w\in C$ and $f'(i1w)=0$ for others. 
\item $f'(ij1\dots 1)=0$ and $f'(ijw)=f(w)$ for all $i,j\ne 1$ and $w\ne 1\dots 1$.
\end{itemize}
Notice that $f'(1\dots 1)=1$. To conclude that $f'$ is a Roman dominating function on $S(K_n,2k+2)$ we only need to observe that all $x\in V^{2k+2}$ of the form $x=1i\dots i$, $i\ne 1$ are adjacent to $i1\dots 1$   and $f'(i1\dots 1)=2$, and all $x\in V^{2k+2}$ of the form $x=ij1\dots 1$, $i,j\ne 1$ are adjacent to $i1j\dots j$   and $f'(i1j\dots j)=2$. 
Finally, by Theorem \ref{DominatingNumberSierpinskiCompletegraphs}, $|C|=\frac{n^{2k}+n}{n+1}$, and so
$$
\omega(f')=\omega(f)+(n-1)(\omega(f)-1)+2|C|(n-1)+(n-1)^2(\omega(f)-1)
               =\frac{2n^{2k+2}+n-1}{n+1},
$$
as required. 
\end{proof}

By Remark \ref{RomanSubgraph} we deduce the following corollary. 

\begin{corollary}
For any graph $G$ of order $n$ and any integer $t$,  
$$\gamma_{_R}(S(G,t)) \geq  \gamma_{_R}(S(K_n,t)) .$$
\end{corollary}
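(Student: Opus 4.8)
The plan is to recognize this corollary as a direct application of Remark \ref{RomanSubgraph} once the right spanning-subgraph relationship is identified. The crucial observation is that, since $G$ has order $n$, we may label its vertex set by $V=\{1,\dots,n\}$, and then $G$ is a spanning subgraph of $K_n$ on the same vertex set, i.e.\ $E(G)\subseteq E(K_n)$.

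First I would lift this inclusion from the base graphs to the Sierpi\'nski graphs. Both $S(G,t)$ and $S(K_n,t)$ have the same vertex set $V^t$, so I only need to compare their edge sets. Consulting the defining formula
$$\{\{wu_iu_j^{r-1},wu_ju_i^{r-1}\}:\,  \{u_i,u_j\}\in E; r\in  \{1,\dots,t\}; w\in V^{t-r} \},$$
I would observe that an edge of $S(G,t)$ arises precisely from an edge $\{u_i,u_j\}\in E(G)$ together with some choice of $r$ and $w$; replacing $E(G)$ by the larger set $E(K_n)$ produces the edge set of $S(K_n,t)$. Hence every edge of $S(G,t)$ is also an edge of $S(K_n,t)$, and therefore $S(G,t)$ is a spanning subgraph of $S(K_n,t)$.

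With that in hand, the conclusion is immediate: applying Remark \ref{RomanSubgraph} with $S(K_n,t)$ playing the role of the ambient graph and $S(G,t)$ playing the role of the subgraph yields $\gamma_{_R}(S(K_n,t))\le \gamma_{_R}(S(G,t))$, which is the claimed inequality. I expect there to be no genuine obstacle here; the only point requiring care is the bookkeeping in the second step, namely confirming the edge-set inclusion directly from the recursive/compact description of $S(G,t)$ rather than merely asserting it. Everything else is a one-line invocation of the already-established monotonicity of $\gamma_{_R}$ under edge addition.
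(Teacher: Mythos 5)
Your proposal is correct and matches the paper's argument exactly: the paper derives this corollary directly from Remark \ref{RomanSubgraph}, implicitly using the fact that $S(G,t)$ is a spanning subgraph of $S(K_n,t)$ on the common vertex set $V^t$, which you verify explicitly from the edge-set definition. Your write-up simply makes the paper's one-line deduction more detailed, with no substantive difference in approach.
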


As the above corollary shows, a lower bound (or a closed formula) on the Roman domination number of $S(K_n,t)$ imposes a lower bound on $\gamma_{_R}(S(G,t))$ for every graph $G$. Therefore, this issue definitely deserves further research.
 
\bibliographystyle{elsart-num-sort}

\end{document}